 \newtheorem{theorem}{Theorem}[section]
 \newtheorem{corollary}[theorem]{Corollary}
 \newtheorem{lemma}[theorem]{Lemma}
 \newtheorem{definition}[theorem]{Definition}
\theoremstyle{remark}
\newtheorem{remark}[theorem]{Remark}
 \numberwithin{equation}{section}
\def\.{{\cdot}}
\def\hat{\widehat}
\newcommand\Z{{\mathbb Z}}
\def\<{\langle} \def\>{\rangle}
\begin{document}

\title[Decomposition of Pauli groups via weak central products]
 {Decomposition of Pauli groups\\ 
 via weak central products}

\author[A. Rocchetto]{Andrea Rocchetto$^\flat$}
\address{$^\flat$  Department of Computer Science\endgraf
\ \ University of Texas at Austin\endgraf
\ \ 2317 Speedway, Austin, TX 78712 United States\endgraf
\ \ Email: \texttt{andrea@cs.utexas.edu}}

\author[F.G. Russo]{Francesco G. Russo$^\sharp$}
\address{$^\sharp$ Department of Mathematics and Applied Mathematics\endgraf
\ \ University of Cape Town\endgraf
\ \ Private Bag X1, 7700, Rondebosch, Cape Town, South Africa\endgraf
\ \  Email: \texttt{francescog.russo@yahoo.com}}

\subjclass[2010]{Primary 81R05, 22E70; Secondary 37K05, 17B80  }

\keywords{central product; Heisenberg group; Pauli group; quantum information theory; quantum mechanics}

\date{\today}

\begin{abstract}
For any $m \ge 1$ and odd prime power $\mathtt{q}=\mathtt{p}^m$, for $\mathtt{q}=2$, and for any $n \ge 1$, we show a result of decomposition for Pauli groups $\mathcal{P}_{n,\mathtt{q}}$ in terms of weak central products. This can be used to describe the underlying structure of Pauli groups on $n$ qudits of dimension $\mathtt{q}$ and enables us to identify  abelian subgroups of $\mathcal{P}_{n,\mathtt{q}}$. As a consequence of our main results, we show a similar factorisation for the `lifted' Pauli groups recently  introduced by Gottesman and Kuperberg in the context of error-correcting codes in quantum information theory.
\end{abstract}

\maketitle

\section{Introduction}

The algebraic properties of the Pauli group find several applications in quantum information and computation. Notable examples are quantum error correcting codes~\cite{ gottesman2010introduction, gottesman1997stabilizer, haah}, efficient classical simulation~\cite{aaronson}, and the theory of mutually unbiased basis~\cite{durt}. Central to these applications are the abelian subgroups of the Pauli group. 

Identifying abelian subgroups is in general a nontrivial problem. Here we present an algebraic decomposition of the Pauli group that enables us to identify its abelian subgroups. Our main observation is that nonabelian extraspecial $\mathtt{p}\,$-groups admit decompositions via central products and this makes easier the identification of their abelian subgroups. 
Although the Pauli group does not belong to this class, it is close to be nonabelian extraspecial.
In fact, it is a generalised nonabelian extraspecial $\mathtt{p}\,$-group. 
These types of groups admit a decomposition result in terms of weak central products.
Leveraging on this result we prove a decomposition for Pauli groups on prime power qudits that offers relevant information on the structure of its subgroups and quotients and highlights the relation between the Heisenberg and Pauli groups.

As a further application of our techniques we show a decomposition result for the `lifted' Pauli groups introduced by Gottesman and Kuperberg in the context of quantum error correcting codes for qudits ~\cite{gottesmankuperberg, gottesmantalk}.

The paper is structured as follows. We begin in Section~\ref{sec:prelims} by recalling some facts on the Heisenberg and Pauli groups. In Section~\ref{sec:Pauli} we prove some lemmas of computational nature and additional facts which will be used for the proofs of the main results, which we present in Section~\ref{sec:main}. We conclude in Section~\ref{sec:applications} where we show a decomposition for `lifted' Pauli groups and discuss the problem of finding abelian subgroups in Pauli groups.

\subsection*{Notation}\label{sec:prelims}

For an arbitrary group $G$, the set $Z(G)=\{a\in G \ | \ ab=ba, \ \forall b \in G\}$ denotes the \textit{center} of  $G$.
$Z(G)$ is a normal subgroup of $G$. For $a,b \in G$ let $[a,b]=a^{-1}b^{-1}ab$ be the \textit{commutator} between $a$ and $b$.
For the commutator we also use the notation $[a,b] =a^{-1}a^b$, where $a^b=b^{-1}ab$ denotes the conjugate of $a$ by $b$.
The \textit{derived subgroup} of a group $G$ is its smallest subgroup containing all $[a,b]$ and is denoted by $[G,G]=\langle [a,b] \ |  \  a,b \in G\rangle$.  As usual, a  subgroup $H$ of $G$ is \textit{maximal}, if $H \neq G$ and for any subgroup $K$ of $G$ such that $H \subseteq K \subseteq G$, then either $K=H$ or $K=G$. The \textit{Frattini subgroup} of $G$ is the intersection of all maximal subgroups of $G$ and is denoted by $\Phi(G)$. $\Phi(G)$ is normal in $G$. Finally, given two subgroups $H$ and $K$ of $G$ we say that  $G=H\rtimes K$ is the \textit{semidirect product} of  $H$ and $K$, if $H\cap K=1$, $H$ is normal in $G$, and $HK=G$. We refer the reader to~\cite{hofmor, kos, rob} for a more detailed presentation of these notions.  

\section{Preliminaries}

We begin with a brief review of relevant notions on the Heisenberg and Pauli groups.

\subsection{The Heisenberg group}
Consider  a field $\mathbb{F}$  of  characteristic $\neq 2$ and triples $(p,q,t) \in \mathbb{F}^3=V$ in the finite dimensional vector space $V$ over $\mathbb{F}$  with  a nondegenerate skew symmetric  bilinear form \( \alpha : ((p_1,q_1),(p_2,q_2))  \in \mathbb{F}^2 \times  \mathbb{F}^2 \to \alpha ((p_1,q_1),(p_2,q_2)) \in \mathbb{F}\). The Heisenberg group $\mathbb{H}(V,\alpha)$ (or simply $\mathbb{H}(\mathbb{F})$) is the set $\mathbb{F}^3$ endowed with the binary operation
\begin{equation} \label{h1}
\square  \ :  \ ((p_1, q_1, t_1), (p_2, q_2, t_2)) \in \mathbb{F}^3 \times \mathbb{F}^3  \mapsto (p_1, q_1, t_1) \ \square \ (p_2, q_2, t_2) \end{equation}
$$ =\ \Big(p_1 + p_2, \ \  q_1 + q_2,  \ \ t_1 + t_2
+ \alpha((p_1,q_1),(p_2,q_2))
  \Big) \in \mathbb{F}^3.$$
See terminology and notations in \cite[Definition 2.1, 2.6 and Theorems 2.4, 2.5]{grundh}. It is common to choose \[\alpha((p_1,q_1),(p_2,q_2)))=p_1q_2-q_1p_2.\]
  Since \(\alpha((p_1,q_1),(p_2,q_2))\) depends on \((p_1,q_1)\) and \((p_2,q_2)\) and is skew symmetric bilinear, $\square$ is not a commutative operation. Moreover $\alpha((p_1,q_1),(p_2,q_2)))$ is alternating, that is, $\alpha((p_1,q_1),(p_1,q_1)))=\alpha((p_2,q_2),(p_2,q_2)))=0$.
One can see that $(\mathbb{F}^3, \square)$ is a group and in fact it is the  \textit{Heisenberg group} $\mathbb{H}(\mathbb{F})$, which admits the more convenient  matrix representation 
\begin{equation} \label{h2}
\mathbb{H}(\mathbb{F}) = \left\{ \left(\begin{array}{ccc}
1 & p & t\\
0 & 1 & q\\
0 & 0 & 1\\
\end{array}
\right) \ | \ p,q,t \in \mathbb{F} \right\}= \{M(p,q;t) \ | \ p,q,t \in \mathbb{F}\}
\end{equation}
with respect to the usual matrix product in the general linear group $\mathrm{GL}_3(\mathbb{F})$. A way to check that $(\mathbb{F}^3, \square)$ is isomorphic to \(\mathbb{H}(\mathbb{F})\) (with respect to the usual product of matrices) is given in \cite[Theorem 6.1]{grundh}. We sketch the main ideas here for convenience of the reader. The first step is to consider the following map:
\[\varphi : (p, q, t) \in \mathbb{F}^3 \mapsto \varphi(p,q,t)= M(p,q;\frac{1}{2}(t+pq)) \in \mathbb{H}(\mathbb{F})\]
 which satisfies the conditions
\[\varphi ((p_1, q_1, t_1) \, \square \, (p_2, q_2, t_2)) =\varphi((p_1+p_2, q_1+q_2, t_1 + t_2 + (p_1q_2-q_1p_2))\]
\[=M\left( p_1+p_2, q_1 +q_2 ; \frac{1}{2} (t_1+t_2 + (p_1q_2-q_1p_2)+ (p_1+p_2)(q_1+q_2)) \right) \]
\[=M \left( p_1+p_2, q_1 +q_2 ; \frac{1}{2}(t_1+t_2 +p_1q_1+p_2q_2) +p_1q_2 \right)\]
\[=M \left( p_1, q_1 ; \frac{1}{2}(t_1 + p_1q_1) \right) \ M \left( p_2, q_2 ; \frac{1}{2}(t_2 + p_2q_2) \right)= \varphi (p_1, q_1, t_1) \ \varphi (p_2, q_2, t_2) \]
and so it is a group homomorphism. The second step is to check   that $\varphi$ is  bijective and this is  routine. In particular, one can see that  the center of $\mathbb{H}(\mathbb{F})$  is the nontrivial proper normal subgroup    
\begin{equation}\label{h3}
Z(\mathbb{H}(\mathbb{F}))=\{M(0,0;t) \ | \ t \in \mathbb{F}\}
\end{equation}
so the group is nonabelian.
It is easy to see that
\begin{equation}\label{h4}
[[\mathbb{H}(\mathbb{F}),\mathbb{H}(\mathbb{F})],\mathbb{H}(\mathbb{F})]=1,  \ \ [\mathbb{H}(\mathbb{F}),\mathbb{H}(\mathbb{F})]=Z(\mathbb{H}(\mathbb{F}))\simeq \mathbb{F}, \ \ \mathbb{H}(\mathbb{F})/Z(\mathbb{H}(\mathbb{F})) \simeq \mathbb{F}^2.
\end{equation}
The condition $[[\mathbb{H}(\mathbb{F}),\mathbb{H}(\mathbb{F})],\mathbb{H}(\mathbb{F})]=1$ means that $\mathbb{H}(\mathbb{F})$ is \textit{nilpotent of class two}, following a classical terminology in group theory (see \cite{rob}).
The other two conditions in~\eqref{h4} show that the derived subgroup coincides with the center and has the size of the ground field $\mathbb{F}$, but the whole group factorized through the center has a size which is  double $\mathbb{F}^2=\mathbb{F} \oplus \mathbb{F}$ than the ground field.

We can deduce these structural properties of $\mathbb{H}(\mathbb{F})$ by the smallest numbers of generators and relations, that is, via  group presentations for $\mathbb{H}(\mathbb{F})$ (see \cite{grundh,hofmor, rob}). For instance, if $\mathbb{F}=\mathbb{F}_\mathtt{p}=\mathbb{Z}(\mathtt{p})$ is the finite field of order $\mathtt{p}$ (with odd $\mathtt{p}$), then
\begin{equation}\label{h5}
\mathbb{H}(\mathbb{Z}(\mathtt{p})) = \langle M(0,1;0), M(1,0;0), M(0,0;1) \ | \ [M(0,1;0), M(1,0;0)]=M(0,0;1) \rangle 
\end{equation}
This group has order $\mathtt{p}^3$. See \cite{rob, gap} for  basic notions on presentation on groups. Note that   \eqref{h5} is realised with the minimal number of generators and relations for $\mathbb{H}(\mathbb{Z}(\mathtt{p}))$. In fact  from \eqref{h4} one has $[M(0,1;0), M(0,0;1)]=[M(1,0;0),M(0,0;1)]=I$.  On the other  hand, it might be useful to add the redundant relations
\(M(0,0;1)^\mathtt{p}= M(1,0;0)^\mathtt{p}=M(0,1;0)^\mathtt{p}=I\)
to \eqref{h5}, since they help to understand that the group is generated by elements of prime order. Of course, one can write $\mathbb{H}(\mathbb{F})$ when $\mathbb{F}$ is a general field (eventually of characteristic zero). In this case \eqref{h5} works perfectly but  \(M(0,0;1)^\mathtt{p}= M(1,0;0)^\mathtt{p}=M(0,1;0)^\mathtt{p}=I\)
are no longer valid. More details can be found in \cite[Theorem 2.5 and Lemma 5.5]{grundh}.

Having a presentation for $\mathbb{H}(\mathbb{F})$ and knowing the meaning of generators and relations helps to understand the behaviour of a dynamical system whose group of symmetries is described by $\mathbb{H}(\mathbb{F})$. However it is possible to get similar information via the notion of semidirect product. If we look at  $\mathbb{H}(\mathbb{F})$, one can show that for any choice of  $\mathbb{F}$  of characteristic $\neq 2$, there are only two abelian maximal subgroups 
$$A = \langle M(0,0;1)\rangle \oplus \langle M(1,0;0) \rangle \simeq \mathbb{F}^2 \ \ \mbox{and} \ \ B = \langle M(0,0;1)\rangle \oplus \langle M(0,1;0) \rangle \simeq \mathbb{F}^2$$
such that $A \cap B = Z(\mathbb{H}(\mathbb{F}))$, $A \cap \langle M(0,1;0) \rangle=1$, $B \cap \langle M(1,0;0) \rangle=1$, and
 \begin{equation}\label{h6}
\mathbb{H}(\mathbb{F})= A \rtimes \langle M(0,1;0) \rangle =B \rtimes \langle M(1,0;0) \rangle \simeq  \mathbb{F}^2 \rtimes \mathbb{F} .
\end{equation}
More details can be found in \cite{grundh, hofmor}. In particular,   the following diagram is well known and describes the placement of the aforementioned subgroups in the lattice of subgroups of $\mathbb{H}(\mathbb{F})$.
\begin{center}
\begin{tikzpicture}
[scale=.8,auto=left]
   \node (d) at (4,3) {$\mathbb{H}(\mathbb{F})$};
  \node (b) at (-0.5,2) {$A$};
  \node (e) at (-2,-0.5) {$\langle M(1,0;0) \rangle $};
  \node (a) at (8,2) {$B$};
  \node (f) at (10,-0.5) {$\langle M(0,1;0) \rangle $};
  \node (c) at (4,-0.5) {$Z(\mathbb{H}(\mathbb{F}))=[\mathbb{H}(\mathbb{F}),\mathbb{H}(\mathbb{F})]$};
  \node (h) at (4,-4) {$1$};

  
  \draw (d) -- (a);
  \draw (d) -- (b);
  \draw (e) -- (b);
  \draw (f) -- (a);
  \draw (h) -- (c);
  \draw (e) -- (h) ;
  \draw (f) -- (h) ;
\draw (a) -- (c) ;
\draw (b) -- (c) ;

 \end{tikzpicture}
\centerline{\textbf{Figure 1}:  Portion of an Hasse diagram of $\mathbb{H}(\mathbb{F})$. }
 \end{center}
\vspace{0.5em}
Note that all the subgroups involved in Fig.~1 are abelian, except $\mathbb{H}(\mathbb{F})$. At the first level we find the trivial subgroup. At the second level there are three subgroups isomorphic to the additive group of the ground field $\mathbb{F}$. At the third level there are just two subgroups isomorphic to the additive group $\mathbb{F}^2$. At the fourth level we find the whole group. 
Finally, note that the Hasse diagram in Fig.~1 presents only the subgroups that can be directly deduced from \eqref{h6} and not all the subgroups of $\mathbb{H}(\mathbb{F})$.

\subsection{The Pauli group}

Consider a finite field $\mathbb{F_{\mathtt{q}}}$ of prime power order $\mathtt{q}=\mathtt{p}^m$ of characteristic $\mathtt{p} \neq 2$.
Define the \textit{shift} operator as
\begin{equation}
    \forall j \in \mathbb{F_{\mathtt{q}}} \quad X \ket{j} = \ket{j+1}
\end{equation}
and the \textit{clock} operators as
\begin{equation}
    \forall j \in \mathbb{F_{\mathtt{q}}} \quad Z \ket{j} = \omega^j  \ket{j}
\end{equation}
where $\omega = \exp(2 \pi i /\mathtt{q})$.
$X$ and $Z$ generate a group which is known as the \textit{Pauli group}.
The Pauli group can also be defined when $\mathtt{q}=2$. In this case, we must include $i$ as a generator.

The Pauli group for $n$ qudits of odd dimension $\mathtt{q} = \mathtt{p}^m$  with $\mathtt{p}$ odd prime is 
\begin{equation}\label{new1}
\mathcal{P}_{n,\mathtt{q}}=\{\omega^c \bigotimes^n_{k=1} X^{\alpha_k} Z^{\beta_k} \ | \ c \in \mathbb{F}_\mathtt{p}, \alpha_k, \beta_k \in \mathbb{F}_\mathtt{q}\}.
\end{equation}
For $\mathtt{q}=2$ the above expression becomes
\begin{equation}\label{new2}
\mathcal{P}_{n,2}=\{i^d \bigotimes^n_{k=1} X^{\alpha_k} Z^{\beta_k} \ | \ d \in \mathbb{F}_4, \alpha_k, \beta_k \in \mathbb{F}_2\},
\end{equation}
which is the well known Pauli group of order $4^{n+1}$ on $n$ qubits.
Elements of the Pauli group are idempotent so that for a $P \in \mathcal{P}_{n,\mathtt{q}}$ we have $P^{\, \mathtt{q}} = I$.

We observe that \eqref{new1} and \eqref{new2}, as well as \eqref{h5}, do not give any structural information on the groups, despite the fact that they describe the groups in terms of generators and elements or the interactions among group elements.
For this reason, it is diffcult to identify abelian subgroups and abelian quotients of the Pauli group.
As we discuss in the next sections, both the Pauli groups and the Heisenberg groups have a similar underlying structure that can be brought to light using a central product description.

In the next sections we we introduce a purely algebraic characterisation of Pauli groups.
Geometric aspects were studied in~\cite{GHW, jp, Woo1, Woo2, z} using tools from projective geometries.

\section{The formalism of Pauli groups in computational group theory}
\label{sec:Pauli}

We present some arguments of finite group theory, adapted to the present context, in order to describe $\mathcal{P}_{n,2}$. First of all we recall that the quotient group $G/[G,G]$ of an arbitrary group $G$ is always  abelian, but not necessarily of the form  $$G/[G,G]  \simeq \underbrace{\mathbb{Z}(\mathtt{p}) \oplus \mathbb{Z}(\mathtt{p}) \oplus \ldots \oplus \mathbb{Z}(\mathtt{p})}_{r-\mbox{times}},$$
that is, $\mathtt{p}$-elementary abelian of rank $r$. The presence of an elementary abelian quotient occurs for $\mathcal{P}_{1,2}$ in the sense of the following result.

 \begin{lemma}\label{l:1} The Pauli group $\mathcal{P}_{1,2}$ can be  presented  both
by \[\langle X,Y,Z \ | \ X^2=Y^2=Z^2=1, (YZ)^4=(ZX)^4=(XY)^4=1 \rangle \]
and by 
\[\mathcal{P}_{1,2} = \langle u,a,b \mid u^4 = a^2 = 1, u^2 = b^2, a^{-1}ua = u^{-1}, ub = bu, ab = ba \rangle, 
\]
where
\[u=XY, \  \ a=Y,  \ \   \mbox{and} \  \ b=XYZ.\]
Moreover, $\mathcal{P}_{1,2}$ has no elements of order $8$,  $Z(\mathcal{P}_{1,2}) \simeq \mathbb{Z}(4)$, $ [\mathcal{P}_{1,2},\mathcal{P}_{1,2}] \subseteq Z(\mathcal{P}_{1,2})$, $[\mathcal{P}_{1,2}, \mathcal{P}_{1,2}]=\Phi(\mathcal{P}_{1,2})$ and $\mathcal{P}_{1,2}/Z(\mathcal{P}_{1,2})$ is $2$-elementary abelian of rank $2$. 
\end{lemma}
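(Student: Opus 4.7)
The plan is to work throughout with the matrix realisation of $\mathcal{P}_{1,2}$ in $\mathrm{GL}_{2}(\mathbb{C})$, deduce both presentations by a short computation, and then extract the structural information from the second presentation, which is manifestly a central product.

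First, I would enumerate the sixteen elements of $\mathcal{P}_{1,2}$ as $\{\pm 1,\pm i\}\cdot\{I,X,Y,Z\}$ and record the multiplication rules $XY=iZ$, $YZ=iX$, $ZX=iY$ together with $X^{2}=Y^{2}=Z^{2}=I$. From these it is routine to verify $(XY)^{4}=(YZ)^{4}=(ZX)^{4}=I$, so the Pauli matrices satisfy the first set of relations and give a surjection from the abstract group of the first presentation onto $\mathcal{P}_{1,2}$. Setting $u=XY$, $a=Y$, $b=XYZ$, one finds $u=iZ$, $a=Y$, $b=iI$, and verifies each relation of the second presentation: $u^{4}=(iZ)^{4}=I$, $a^{2}=I$, $u^{2}=-I=b^{2}$, $a^{-1}ua=Y(XY)Y=YX=u^{-1}$, and $b=iI$ is central. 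Conversely $Y=a$, $X=ua$, $Z=u^{-1}b$, so the two sets of generators are related by an explicit change of variables.

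Second, I would bound the order of the abstract group $G_{2}$ defined by the second presentation. The subgroup $\langle u,a\rangle$ is a quotient of the dihedral group of order $8$, and $\langle b\rangle$ is a central cyclic subgroup of order dividing $4$; the relation $b^{2}=u^{2}$ forces $\langle u,a\rangle\cap\langle b\rangle\supseteq\langle u^{2}\rangle$, so $\langle u,a\rangle\cdot\langle b\rangle$ has order at most $8\cdot 4/2=16$. Combined with the surjection onto $\mathcal{P}_{1,2}$ this gives $G_{2}\simeq\mathcal{P}_{1,2}$, and transporting the change of variables settles the first presentation at the same time. The group is thus a central product of a dihedral group of order $8$ with $\mathbb{Z}(4)$ amalgamated over $\mathbb{Z}(2)$, and this is the structural picture that will do all of the work below.

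Third, I would extract the listed structural statements by writing every element of $\mathcal{P}_{1,2}$ uniquely as $u^{i}a^{j}b^{k}$ with $0\le i\le 3$ and $j,k\in\{0,1\}$. Imposing commutativity with $u$ and with $a$ on such a word singles out the center as $\langle b\rangle\simeq\mathbb{Z}(4)$. The quotient $\mathcal{P}_{1,2}/Z(\mathcal{P}_{1,2})$ is generated by the images of $u$ and $a$, which are of order two and commute since $[u,a]=u^{-2}\in Z(\mathcal{P}_{1,2})$; hence it is elementary abelian of rank $2$. Since this quotient is abelian we have $[\mathcal{P}_{1,2},\mathcal{P}_{1,2}]\subseteq Z(\mathcal{P}_{1,2})$, and the only nontrivial commutator $[u,a]=u^{2}$ generates it, so $[\mathcal{P}_{1,2},\mathcal{P}_{1,2}]=\langle u^{2}\rangle$ has order two. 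Using the formula $\Phi(G)=G^{2}[G,G]$ for finite $2$-groups and noting that every generator squares into $\langle u^{2}\rangle$ yields $\Phi(\mathcal{P}_{1,2})=[\mathcal{P}_{1,2},\mathcal{P}_{1,2}]$, and each $u^{i}a^{j}b^{k}$ squares into the central subgroup $\langle u^{2}\rangle$ of order two, ruling out elements of order $8$. The main obstacle is cleanly bridging the two presentations: the first set of relations does not obviously collapse to a group of order $16$, and the cleanest way to see that it does is to match generators with the central-product presentation rather than to run a direct coset enumeration.
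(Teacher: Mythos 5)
Your treatment of the second presentation and of all the structural claims is sound and follows essentially the same route as the paper: verify the relations on the Pauli matrices, recognise $\langle u,a\rangle$ as a quotient of $D_8$ and $\langle b\rangle$ as a central cyclic group of order dividing $4$ amalgamated over $\langle u^2\rangle=\langle b^2\rangle$, bound the order of the presented group by $16$, and then read off the center, the derived subgroup, the Frattini subgroup and the exponent from the normal form $u^{i}a^{j}b^{k}$. On these points you are, if anything, more careful than the paper, which asserts the order count and the equivalence of the two presentations without the explicit $8\cdot 4/2$ computation or the normal-form analysis.

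The genuine gap is in your claim that ``transporting the change of variables settles the first presentation at the same time.'' To run that argument you would need the relations of the second presentation to be consequences of $X^2=Y^2=Z^2=(XY)^4=(YZ)^4=(ZX)^4=1$ under the substitution $u=XY$, $a=Y$, $b=XYZ$; in particular you would need $[XYZ,X]=[XYZ,Y]=1$ and $(XY)^2=(XYZ)^2$ to hold in the abstract group $G_1$ defined by the first presentation. They do not: $G_1$ is exactly the $(4,4,4)$ triangle (Coxeter) group, which is infinite because $\tfrac14+\tfrac14+\tfrac14<1$, and in that group $XYZ$ is not central. So the change of variables only reproduces the surjection $G_1\twoheadrightarrow\mathcal{P}_{1,2}$ that you already had from the matrix verification; the first presentation as literally written does not collapse to a group of order $16$, and no rearrangement of generators can make it do so. (The paper's own proof shares this defect --- it only checks that the Pauli matrices satisfy the listed relations --- so the real issue is with the statement of the lemma: a correct first presentation needs additional relations, e.g.\ the centrality of $XYZ$, exactly as the paper itself imposes for $\mathcal{P}_{2,2}$ in \eqref{newpres}.) Everything you derive from the second presentation stands.
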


\begin{proof}
Let us consider the Pauli matrices
\begin{equation}\label{paulimatricesxyz} X=\left(
\begin{array}{rr}
0 & 1 \\
1 & 0%
\end{array}%
\right) ,Y=\left(
\begin{array}{rr}
0 & -i \\
i & 0%
\end{array}%
\right) \text{ and }Z=\left(
\begin{array}{rr}
1 & 0 \\
0 & -1%
\end{array}%
\right)
\end{equation}
and check that $
X^2=Y^{2}=Z^{2}=I=\left(
\begin{array}{cc}
1 & 0 \\
0 & 1%
\end{array}%
\right) $ so the first relation is satisfied. Now one can check  that also the other relations $(YZ)^4=(ZX)^4=(XY)^4=1$ are satisfied, via the usual product of  matrices. In addition $X^2=1$ implies $X=X^{-1}$ and similarly $Y=Y^{-1}$ and $Z=Z^{-1}$  so one can find the  following calculus rules
$${(XYZ)}^4=1, \ \, [XYZ,X]=1, \ \, [XYZ,Y]=1, \ \, [XYZ,Z]=1,$$
which show that $Z(\mathcal{P}_{1,2})=\langle XYZ \rangle \simeq \mathbb{Z}(4)$. The absence of elements of order 8  can be checked either directly or looking at the relations in $\mathcal{P}_{1,2}$, this means that the maximum order of an element in $\mathcal{P}_{1,2}$ cannot exceed 4, or, equivalently, that the exponent of $\mathcal{P}_{1,2}$ is $4$.  In order to do this, one can introduce
\[
u=XY, \  \ a=Y,  \ \   \mbox{and} \  \ b=XYZ,
\]
 and show that the original presentation is equivalent to the following 
\begin{equation}\label{g1}
\mathcal{P}_{1,2} = \langle u,a,b \mid u^4 = a^2 = 1, u^2 = b^2, a^{-1}ua = u^{-1}, ub = bu, ab = ba \rangle. 
\end{equation}
Here, 
\begin{equation}\label{d8}
D_8=\langle u,a \rangle= \langle u,a \ | \ u^4=a^2=1, a^{-1}ua = u^{-1}\rangle  \ \ \mbox{and} \ \ \mathbb{Z}(4)= \langle b  \ | \ b^4=1\rangle  
\end{equation}  describe the dihedral group of order eight and the cyclic of order four, respectively. By motivations of order we get $|\mathcal{P}_{1,2}/Z(\mathcal{P}_{1,2})|=4$, concluding that  $[\mathcal{P}_{1,2},\mathcal{P}_{1,2}] \subset Z(\mathcal{P}_{1,2})$ and that no elements of order 8 are contained in $\mathcal{P}_{1,2}$.

 Further computations show that $\mathcal{P}_{1,2}/Z(\mathcal{P}_{1,2}) \simeq \mathbb{Z}(2) \times \mathbb{Z}(2)$. In addition, one can check directly that $\Phi(\mathcal{P}_{1,2})=[\mathcal{P}_{1,2},\mathcal{P}_{1,2}]$. 
\end{proof}

The previous results offers a fast and efficient way to detect abelian subgroups in $\mathcal{P}_{1,2}$. Due to the decomposition that we have described, we can look at a portion of its Hasse diagram recognising $D_8=\langle a, u\rangle$ and $\mathbb{Z}(4)=\langle b \rangle$ and drawing the Hasse diagram of $D_8$ along with the information we have in the proof of Lemma \eqref{l:1}.

\begin{center}
\begin{tikzpicture}
[scale=.8,auto=left]
   \node (d) at (0,0) {$\langle u, a,b \rangle$};
   \node (b) at (-2,-2) {$\langle u, a \rangle$};
   \node (e) at (-4,-4) {$\langle u^2, ua \rangle $};
  \node (a) at (2,-2) {$\langle u, b \rangle$};
  \node (f) at (4,-4) {$\langle b \rangle $};
\node (l) at (-4,-6) {$\langle  ua \rangle$}; 
 \node (m) at (-2,-6) {$\langle  u^3a \rangle$};
  \node (i) at (-2,-4) {$\langle u  \rangle$};
  \node (c) at (2,-4) {$\langle u^2, a \rangle$};
  \node (g) at (0,-6) {$\langle u^2 \rangle$};
  \node (h) at (0,-8) {$1$};
  \node (n) at (4,-6) {$\langle u^2a \rangle$};
    \node (o) at (2,-6) {$\langle a \rangle$};

  
\draw (n) -- (h);  
\draw (o) -- (h);  
\draw (n) -- (c);  
\draw (o) -- (c);  
\draw (i) -- (g);  
  \draw (d) -- (a);
  \draw (d) -- (b);
  \draw (d) -- (c);
  \draw (e) -- (b);
  \draw (f) -- (a);
  \draw (g) -- (c);
  \draw (h) -- (g);
  \draw (e) -- (g) ;
  \draw (f) -- (g) ;
\draw (a) -- (i) ;
\draw (b) -- (c) ;
\draw (b) -- (i) ;
\draw (m) -- (h) ;
\draw (l) -- (h) ;
\draw (l) -- (e) ;
\draw (m) -- (e) ;

 \end{tikzpicture}
\centerline{\textbf{Figure 2}:  Portion of an Hasse Diagram of $\mathcal{P}_{1,2}$. }
 \end{center}
\vspace{0.5em}
At the first level of Fig.~2, there is the trivial subgroup; at the second level some subgroups of order two, so abelian. Then some subgroups of order $4$ again abelian at the third level. The level just below the whole group has two nonabelian subgroups.

We now investigate what happens to \eqref{new1} in case $\mathtt{p}$ is odd. A first important example can be illustrated, in order to understand the structure of Pauli groups on qudits.

\begin{remark}\label{newexample} Consider $\mathtt{p}=3$ in \eqref{new1}. Then $\mathcal{P}_{1,3}$ is a nonabelian $3$-group of order $27$.
More explicitly, the elements of $\mathcal{P}_{1,3}$ can be visualized via the following ($3\times 3$) nonsingular matrices, endowed of the usual operation of row by column, and obtained by the set multiplication 

\begin{gather*}
\{ 1,\omega, \omega^2 \} \times  \\
\Bigg\{ 
\left(\begin{array}{lll}{1} & {0} & {0} \\ {0} & {1} & {0} \\ {0} & {0} & {1}\end{array}\right), 
\; 
\left(\begin{array}{ccc}{1} & {0} & {0} \\ {0} & {e^{\frac{2 \pi i}{3}}} & {0} \\ {0} & {0} & {e^{\frac{4 \pi i}{3}}}\end{array}\right), 
\;
\left(\begin{array}{ccc}{1} & {0} & {0} \\ {0} & {e^{\frac{4 \pi i}{3}}} & {0} \\ {0} & {0} & {e^{\frac{8 \pi i}{3}}}\end{array}\right), 
\;
\left(\begin{array}{ccc}{0} & {1} & {0} \\ {0} & {0} & {1} \\ {1} & {0} & {0}\end{array}\right),
\;
\left(\begin{array}{ccc}{0} & {0} & {1} \\ {1} & {0} & {0} \\ {0} & {1} & {0}\end{array}\right),
\\
\left(\begin{array}{ccc}{0} & {e^{\frac{2 \pi i}{3}}} & {0} \\ {0} & {0} & {e^{\frac{4 \pi i}{3}}} \\ {1} & {0} & {0}\end{array}\right), 
\;
\left(\begin{array}{ccc}{0} & {e^{\frac{4 \pi i}{3}}} & {0} \\ {0} & {0} & {e^{\frac{8 \pi i}{3}}} \\ {1} & {0} & {0}\end{array}\right), 
\;
\left(\begin{array}{ccc}{0} & {0} & {e^{\frac{4 \pi i}{3}}} \\ {1} & {0} & {0} \\ {0} & {e^{\frac{2 \pi i}{3}}} & {0}\end{array}\right), 
\;
\left(\begin{array}{ccc}{0} & {0} & {e^{\frac{8 \pi i}{3}}} \\ {1} & {0} & {0} \\ {0} & {e^{\frac{4 \pi i}{3}}} & {0}\end{array}\right)
\Bigg\} .
\end{gather*}

Now a direct computation (or  looking at \cite{gap}) shows that there is only one nonabelian group of order $27$ and exponent three. See Remark 3.9 below for more details.
\end{remark}

The proof of Lemma \ref{l:1} shows that $\mathcal{P}_{1,2}$ can be decomposed into the non-semidirect product of a subgroup, which is isomorphic to $D_8$, and of another which is cyclic of order four. Product decompositions have long been studied in group theory since they allow us to control the structural properties of groups. In general, semidirect products of finite groups of the form $G=AB$ imply $|G|=|A| \cdot |B|$, but for the non-semidirect products that appear in Lemma \ref{l:1} we have that $|G|\le |A| \cdot |B|$. Some well known decompositions appear for various families of groups. For instance, abelian finitely generated groups may be decomposed in direct products of cyclic groups \cite[ 4.2.10]{rob}.  
Remak's decomposition  \cite[3.3.12]{rob}  describes further conditions of splitting in direct products and the Schur--Zassenhaus Theorem introduces a decomposition more general than those obtained with direct products \cite[9.1.2]{rob}. 

Here we give a decomposition based on the notion of weak central product. The central product is an important concept in group theory since the structure of most nilpotent groups can be described via central products (recall that the Heisenberg group is nilpotent). 
These kind of products have only recently been used to describe properties of physical systems~\cite{bagrus1}.

The weak central product is defined as
\begin{definition}[Weak central product]
\label{def:1}
A group $G$ is the weak central product of its normal subgroups $H$ and $K$, if simultaneously
\begin{itemize}
\item[(i)] $G=HK$;
\item[(ii)]$[H, K] \subseteq Z(G)$.
\end{itemize}
\end{definition}
When  Definition \ref{def:1} is satisfied with $[H, K]=C\subseteq Z(G)$, we write  $$G= H   \bullet_C  K$$ in order to specify the portion of the center which allows us to make the central product.
Note that $[H,K] \subseteq H \cap K$ above, because $H$ and $K$ are both normal in $G$, but we do not know in general whether $[H,K]=1$ or not. When   $[H,K]=H \cap K =Z(G)$, we follow \cite[Pages 145--146]{rob} calling $G$  \textit{central product} of $H$ and $K$. In this case, we use the notation $G= H   \bullet  K$, where we dropped the subscript for $C$.

\begin{remark}\label{additional}
The Heisenberg group $\mathbb{H}(\Z(\mathtt{p}))$ can be decomposed in semidirect product as in \eqref{h6}, but also as central product of $A=\langle M(1,0;0), M(0,0;1)\rangle$ and $B=\langle M(0,1;0), M(0,0;1)\rangle$, noting that (i) and (ii) are satisfied in Definition \ref{def:1}. In fact  $$[A,B] = A \cap B= \langle M(0,0;1) \rangle=Z(\mathbb{H}(\Z(\mathtt{p})))$$ in this specific case.
\end{remark}

We observe that it is possible to have (ii) of Definition \ref{def:1} as a strict inclusion. 

\begin{remark}\label{additionalbis}
 From Remark \ref{additional}, note that $A \simeq B \simeq \Z(\mathtt{p}) \oplus \Z(\mathtt{p})$ and so
$$\mathbb{H}(\Z(\mathtt{p})) = A \bullet B \simeq (\Z(\mathtt{p}) \oplus \Z(\mathtt{p})) \bullet (\Z(\mathtt{p}) \oplus \Z(\mathtt{p}))$$
is an alternative description for the Heisenberg group $\mathbb{H}(\Z(\mathtt{p}))$.
\end{remark}

We recall the following definition from~\cite[Page 140]{rob}:
\begin{definition}[Extraspecial $\mathtt{p}$-group]
 A finite group $G$ of  $|G|=\mathtt{p}^n$ ($\mathtt{p}$ prime and $n \ge 1$) is \textit{extraspecial} if $Z(G)=[G,G]$ and $Z(G)$ has order $\mathtt{p}$.
\end{definition}

The structure of these groups can be described in terms of central products (Definition \ref{def:1}) and some relevant classes of $\mathtt{p}$-groups. In order to accomplish this task we recall some further results of finite group theory.

\begin{lemma}[See \cite{rob}]\label{e1e2}
If $\mathtt{p}$ is odd, any nonabelian $\mathtt{p}$-group of order $\mathtt{p}^3$ must be isomorphic either to
$$E_1= \langle x,y  \ | \ x^\mathtt{p}=y^\mathtt{p}=1, x^{-1}[x, y]x = y^{-1}[x, y]y = [x, y] \rangle, $$
which is called nonabelian $\mathtt{p}$-groups of order $\mathtt{p}^3$ and exponent $\mathtt{p}$, or to
$$E_2= \langle x,y  \ | \ x^{\mathtt{p}^2}=y^p=1, y^{-1}xy = x^{1+\mathtt{p}} \rangle, $$
which is called nonabelian $\mathtt{p}$-groups of order $\mathtt{p}^3$ and exponent $\mathtt{p}^2$.
\end{lemma}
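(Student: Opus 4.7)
The plan is to classify all nonabelian groups $G$ of order $\mathtt{p}^3$ for odd $\mathtt{p}$ by first pinning down the center and commutator subgroup, and then splitting into cases according to the exponent of $G$. We already know (from the discussion preceding the lemma) that every $\mathtt{p}$-group has a nontrivial center. If $|Z(G)|\ge \mathtt{p}^2$, then $G/Z(G)$ has order $\le \mathtt{p}$, hence is cyclic, and the classical fact that $G/Z(G)$ cyclic forces $G$ abelian would contradict nonabelianness. So $|Z(G)|=\mathtt{p}$. Consequently $G/Z(G)$ has order $\mathtt{p}^2$ and is abelian; therefore $[G,G]\subseteq Z(G)$, and since $G$ is nonabelian $[G,G]=Z(G)$, which already places $G$ in the extraspecial family introduced just before the lemma. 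Moreover $G/Z(G)\simeq \mathbb{Z}(\mathtt{p})\oplus\mathbb{Z}(\mathtt{p})$, so $G$ is generated by two elements $x,y$ whose images form a basis of $G/Z(G)$, and $[x,y]$ is a nontrivial element of $Z(G)$, so $Z(G)=\langle [x,y]\rangle$.

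Next I would record the key calculus available because $G$ is nilpotent of class $2$ with $\mathtt{p}$ odd: the identity
\[
(ab)^{\mathtt{p}} \;=\; a^{\mathtt{p}}\, b^{\mathtt{p}}\, [b,a]^{\binom{\mathtt{p}}{2}}
\]
holds for all $a,b\in G$, and since $\mathtt{p}$ is odd the binomial coefficient $\binom{\mathtt{p}}{2}$ is divisible by $\mathtt{p}$, while $[b,a]\in Z(G)$ has order $\mathtt{p}$. Hence $(ab)^{\mathtt{p}} = a^{\mathtt{p}} b^{\mathtt{p}}$, i.e.\ the $\mathtt{p}$-th power map is a group homomorphism $G\to Z(G)$. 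This is the main technical tool and is where the hypothesis $\mathtt{p}$ odd really bites. The exponent of $G$ divides $\mathtt{p}^3$, cannot equal $\mathtt{p}^3$ (else $G$ cyclic, hence abelian), so the exponent is either $\mathtt{p}$ or $\mathtt{p}^2$.

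If the exponent is $\mathtt{p}$, then $x^{\mathtt{p}}=y^{\mathtt{p}}=1$, the element $[x,y]$ is central, and the class-$2$ relations $[x,[x,y]]=[y,[x,y]]=1$ rewrite as $x^{-1}[x,y]x=y^{-1}[x,y]y=[x,y]$. A standard von Dyck argument shows that $\langle x,y\mid x^{\mathtt{p}}=y^{\mathtt{p}}=1,\; [x,[x,y]]=[y,[x,y]]=1\rangle$ has order at most $\mathtt{p}^3$; since it surjects onto $G$ which has order exactly $\mathtt{p}^3$, it is isomorphic to $E_1$. If the exponent is $\mathtt{p}^2$, I first produce an element $x$ of order $\mathtt{p}^2$; then $\langle x\rangle$ is a maximal (hence normal) subgroup containing $Z(G)$. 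The kernel $\Omega_1(G):=\{g\in G\mid g^{\mathtt{p}}=1\}$ is a subgroup by the homomorphism property above, it contains $Z(G)$, and it is strictly larger than $Z(G)$ because not every element of $G$ can have order $\mathtt{p}^2$ (otherwise $x^{-1}$ and $yx$ for suitable $y$ would give an element of order $\mathtt{p}$ outside $Z(G)$ by a counting argument on the $\mathtt{p}$-th power map whose image has order $\mathtt{p}$). Choosing $y\in\Omega_1(G)\setminus\langle x\rangle$ gives $y^{\mathtt{p}}=1$ and $\{x,y\}$ generates $G$.

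To finish the exponent-$\mathtt{p}^2$ case I would use normality of $\langle x\rangle$: conjugation gives $y^{-1}xy=x^k$ for some $k$, and from $x^{-1}y^{-1}xy=[x,y]\in Z(G)\subseteq\langle x\rangle$, with $[x,y]$ of order $\mathtt{p}$, one reads $x^{k-1}$ has order $\mathtt{p}$, forcing $k\equiv 1\pmod{\mathtt{p}}$ and $k\not\equiv 1\pmod{\mathtt{p}^2}$. Writing $k=1+t\mathtt{p}$ with $\gcd(t,\mathtt{p})=1$ and replacing $y$ by a suitable power $y^{s}$ (note $y^s$ still has order $\mathtt{p}$ and still lies outside $\langle x\rangle$) replaces $t$ by $st\pmod{\mathtt{p}}$, so we may arrange $k=1+\mathtt{p}$. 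This yields precisely the relations defining $E_2$, and another order-comparison via von Dyck shows $G\simeq E_2$. The step I expect to be the main obstacle, and that I would treat most carefully, is producing the order-$\mathtt{p}$ generator $y$ outside $\langle x\rangle$ in the exponent-$\mathtt{p}^2$ case, since this is exactly where the odd-$\mathtt{p}$ hypothesis is essential (for $\mathtt{p}=2$ the analogous statement fails, and one gets the quaternion group alongside the dihedral one).
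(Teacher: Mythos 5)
Your proof is correct, but there is nothing in the paper to compare it against: the lemma is stated with the tag ``See \cite{rob}'' and no proof is given, since this is the classical classification of nonabelian groups of order $\mathtt{p}^3$ from Robinson's book. What you have written is essentially the standard textbook argument, and every step checks out: $|Z(G)|=\mathtt{p}$ and $[G,G]=Z(G)=\langle[x,y]\rangle$ are forced exactly as you say; the class-$2$ identity $(ab)^{\mathtt{p}}=a^{\mathtt{p}}b^{\mathtt{p}}[b,a]^{\binom{\mathtt{p}}{2}}$ together with $\mathtt{p}\mid\binom{\mathtt{p}}{2}$ (this is indeed the only place oddness of $\mathtt{p}$ enters) makes $g\mapsto g^{\mathtt{p}}$ a homomorphism into $Z(G)$; and the two exponent cases then close via von Dyck order comparisons. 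The one spot where your write-up is needlessly convoluted is the existence of an order-$\mathtt{p}$ generator outside $\langle x\rangle$ in the exponent-$\mathtt{p}^2$ case: the parenthetical about ``$x^{-1}$ and $yx$ for suitable $y$'' can be replaced by the direct observation that the power map has image of order exactly $\mathtt{p}$ (it is contained in $Z(G)$ and contains $x^{\mathtt{p}}\neq 1$), so its kernel $\Omega_1(G)$ has order $\mathtt{p}^2$, whereas the cyclic group $\langle x\rangle\simeq\mathbb{Z}(\mathtt{p}^2)$ contains only $\mathtt{p}$ elements of order dividing $\mathtt{p}$; hence $\Omega_1(G)\not\subseteq\langle x\rangle$. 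With that cleaned up, your argument is a complete and self-contained proof of the cited result, which the paper itself does not supply.
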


In fact one can see that $E_1$ has no elements of order $\mathtt{p}^2$, while $E_2$ has it. Note that $\mathcal{P}_{1,3}$ in Remark \ref{newexample} is exactly $E_2$ when $\mathtt{p}=3$. In case $\mathtt{p}=2$, the situation is completely different.

\begin{lemma}[See \cite{rob}]\label{d8q8}
Any nonabelian $2$-group of order $8$ either is isomorphic to the dihedral group 
$$D_8= \langle x,y \  | \ x^4=y^2=1, y^{-1}xy=x^{-1}\rangle$$ 
 or to the quaternion group
$$Q_8= \langle i,j,k  \ | \ i^2=j^2=k^2=-1, ij=k, jk=i, ki=j \rangle.$$ 
\end{lemma}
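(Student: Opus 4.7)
The plan is to classify nonabelian groups $G$ of order $8$ by locating a normal cyclic subgroup of index $2$ and analysing how an element outside it acts by conjugation. First I would rule out degenerate cases: $G$ cannot be cyclic (else abelian), and $G$ cannot have exponent $2$ since in any group of exponent $2$ every element is its own inverse, so $(xy)^2=1$ forces $xy=yx$; this would again make $G$ abelian. Consequently $G$ must contain an element $x$ of order $4$, and I would set $H=\langle x\rangle$.

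Since $[G:H]=2$, the subgroup $H$ is normal in $G$. Choose any $y\in G\setminus H$. Conjugation by $y$ preserves orders, so $y^{-1}xy$ is an element of $H$ of order $4$, forcing $y^{-1}xy\in\{x,x^{-1}\}$. The first option would put $y$ in the centraliser of $x$, and together with $\langle x,y\rangle=G$ would make $G$ abelian; hence
\[
y^{-1}xy=x^{-1}.
\]
Next I would exploit that $y^2\in H$: if $y^2\in\{x,x^3\}$ then $y$ has order $8$ and $G=\langle y\rangle$ is cyclic, a contradiction. Therefore $y^2\in\{1,x^2\}$, yielding exactly two cases.

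In the case $y^2=1$, the relations $x^4=y^2=1$ and $y^{-1}xy=x^{-1}$ are precisely the presentation of $D_8$ given in the statement, and a von Dyck-style argument shows that the abstract group they define surjects onto $G$; comparing orders (both equal to $8$) gives $G\cong D_8$. In the case $y^2=x^2$, set $i=x$, $j=y$, $k=xy$ and $-1:=x^2$. Using $y^{-1}xy=x^{-1}$ one checks directly
\[
i^2=j^2=x^2=-1, \qquad k^2=(xy)(xy)=x(yxy^{-1})y^2=x\cdot x^{-1}\cdot x^2=-1,
\]
and similarly $ij=k$, $jk=y\cdot xy=(yxy^{-1})y^2=x^{-1}x^2=x=i$, and $ki=(xy)x=x(yxy^{-1})y\cdot y^{-1}\cdot y=x\cdot x^{-1}\cdot y=y=j$, which are exactly the defining relations of $Q_8$. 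Again a comparison of orders upgrades the surjection to an isomorphism $G\cong Q_8$.

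The only real obstacle is the verification in the second case that the relation $y^2=x^2$, combined with $y^{-1}xy=x^{-1}$, really reproduces the full multiplication table of $Q_8$; but this is a brief direct computation as above. Everything else amounts to routine order arithmetic and invocation of the fact that a group of order $8$ is determined by a presentation with the correct relations once the two orders match.
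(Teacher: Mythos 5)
Your proof is correct. The paper itself gives no argument for this lemma---it is stated as a known classification and attributed to Robinson's book---and your proof is exactly the standard textbook argument that citation points to: locate an element $x$ of order $4$ (ruling out the cyclic and exponent-$2$ cases), use that $\langle x\rangle$ has index $2$ to force $y^{-1}xy=x^{-1}$ for any $y\notin\langle x\rangle$, split into the cases $y^2=1$ and $y^2=x^2$, and match each against the presentations of $D_8$ and $Q_8$ via von Dyck plus an order count.
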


Note that any nonabelian $\mathtt{p}$-group of order $\mathtt{p}^3$ is extraspecial, both if $\mathtt{p}$ is even and if $\mathtt{p}$ is odd. For sure, the importance of these extraspecial groups appears the moment we observe the following fact

\begin{remark}\label{nice}
We have  $\mathbb{H}(\Z(\mathtt{p})) \simeq E_2$ for $p$ odd and  $\mathbb{H}(\Z(\mathtt{2})) \simeq D_8$ for $\mathtt{p}=2$ and in both cases we have extraspecial groups. In a similar vein one can see that   $$\mathbb{H}(\Z(\mathtt{p}^n)) \simeq (\Z(\mathtt{p}^n) \oplus \Z(\mathtt{p}^n)) \rtimes \Z(\mathtt{p}^n)$$  is always  extraspecial  (eventually with $\mathtt{p}=2$) of order $\mathtt{p}^{3n}$ and center of order $\mathtt{p}$, but it is more interesting to describe 
$$\mathbb{H}({\Z(\mathtt{p})}^n) \simeq \underbrace{(\Z(\mathtt{p}) \oplus \ldots \oplus \Z(\mathtt{p}))}_{n-\mathrm{times}} \rtimes \Z(\mathtt{p})={\Z(\mathtt{p})}^n \rtimes \Z(\mathtt{p}),$$
which is again an extraspecial $\mathtt{p}$-group of order
$\mathtt{p}^{2n+1}$ and center always of order $\mathtt{p}$. Note that the group in Remark \ref{newexample} is isomorphic to $\mathbb{H}(\Z(3))$, and, more generally the Pauli group on one qudits $\mathcal{P}_{1,p}$ for odd $\mathtt{p}$ is a nonabelian $\mathtt{p}$-group of order $\mathtt{p}^3$ and exponent $\mathtt{p}$, that is, $\mathcal{P}_{1,p} \simeq E_2 \simeq \mathbb{H}(\Z(\mathtt{p}))$.
\end{remark}

Now we report the main classification of extraspecial $\mathtt{p}$-groups.

\begin{lemma}[See \cite{rob}, Exercises 6 and 7]\label{l:2} A nonabelian extraspecial group $G$ of $|G|=\mathtt{p}^{2n+1}$ has $Z(G)$ of order $\mathtt{p}$ and  $\mathtt{p}$-elementary abelian quotient $G/Z(G)$. Moreover, if $\mathtt{p}=2$, then  $G$ is the central product of $D_8$'s or a central product of $D_8$'s and a single $Q_8$. If $\mathtt{p}>2$, then  either $G$ has exponent $\mathtt{p}$, or else it is a central product $E_1$'s and a single $E_2$.
\end{lemma}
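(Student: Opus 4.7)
The plan is to endow $V:=G/Z(G)$ with a nondegenerate symplectic form arising from the commutator, peel off an extraspecial factor of order $\mathtt{p}^3$ using centralisers, and iterate; the delicate final step is to collapse the resulting list of factors via two ``absorption'' isomorphisms.

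First I would show that $V$ is $\mathtt{p}$-elementary abelian. Since $[G,G]=Z(G)$, the group $G$ is of nilpotency class two, so the standard identity $[x^\mathtt{p},y]=[x,y]^\mathtt{p}$ holds; because $|Z(G)|=\mathtt{p}$ the right-hand side vanishes, giving $x^\mathtt{p}\in Z(G)$ for every $x\in G$. Thus $V$ is an $\mathbb{F}_\mathtt{p}$-vector space, necessarily of dimension $2n$ by counting. The commutator then induces a well-defined map
\[
\alpha\colon V\times V\longrightarrow Z(G)\cong\mathbb{F}_\mathtt{p},\qquad \alpha(xZ(G),yZ(G))=[x,y],
\]
which is bilinear (from the class-two identities) and nondegenerate, since a vector in the radical lifts to an element commuting with all of $G$ and hence lies in $Z(G)$.

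Next I would split off a single extraspecial factor of order $\mathtt{p}^3$. Pick $v\in V\setminus\{0\}$ and $w\in V$ with $\alpha(v,w)\neq 0$; lifting to $x,y\in G$, the subgroup $H=\langle x,y,Z(G)\rangle$ has order $\mathtt{p}^3$ and is nonabelian, hence extraspecial. By Lemmas~\ref{e1e2} and~\ref{d8q8}, $H$ is isomorphic to one of $D_8,\,Q_8,\,E_1,\,E_2$. Setting $K=C_G(H)$, the image $K/Z(G)$ is the symplectic orthogonal of the nondegenerate plane $H/Z(G)$, forcing $|K|=\mathtt{p}^{2n-1}$ and $H\cap K=Z(G)$. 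Therefore $G=HK$ with $[H,K]=1$, a central product $G=H\bullet K$, and a short verification shows that $K$ is either central (when $n=1$) or again extraspecial, of order $\mathtt{p}^{2n-1}$. Induction on $n$ then expresses $G$ as a central product of extraspecial groups of order $\mathtt{p}^3$.

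The main obstacle is cutting the number of ``exceptional'' factors down to at most one. For $\mathtt{p}$ odd I would prove $E_2\bullet E_2\cong E_1\bullet E_2$, and for $\mathtt{p}=2$ the analogue $Q_8\bullet Q_8\cong D_8\bullet D_8$, by writing explicit generators on each side and matching the presentations from Lemmas~\ref{e1e2}--\ref{d8q8}. Iterating this absorption reduces every decomposition to one of the two canonical forms listed in the statement; the two forms are genuinely distinct because they are distinguished by a separate invariant, namely the exponent ($\mathtt{p}$ versus $\mathtt{p}^2$) of $G$ when $\mathtt{p}$ is odd, and the Arf invariant of the quadratic refinement $q(xZ(G))=x^2\in Z(G)\cong\mathbb{F}_2$ when $\mathtt{p}=2$. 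The generator-level bookkeeping that yields the two absorption isomorphisms is the heart of the argument.
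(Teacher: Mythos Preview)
Your argument is correct and is the standard textbook route: endow $G/Z(G)$ with the commutator symplectic form, strip off hyperbolic planes to obtain an iterated central product of order-$\mathtt{p}^3$ extraspecials, and then normalise the factor list via the absorption isomorphisms $E_2\bullet E_2\cong E_1\bullet E_2$ (odd $\mathtt{p}$) and $Q_8\bullet Q_8\cong D_8\bullet D_8$ ($\mathtt{p}=2$). The invariants you name at the end (exponent, respectively the Arf invariant of $xZ(G)\mapsto x^2$) are exactly the right separators for the two isomorphism types.

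There is, however, nothing to compare against: the paper does not prove Lemma~\ref{l:2}. It is stated with the attribution ``See \cite{rob}, Exercises~6 and~7'' and used as a black box. Your write-up is essentially the intended solution to those exercises in Robinson, and would be a perfectly acceptable proof to insert if one were wanted. The only place where genuine work hides is the pair of absorption isomorphisms you single out; these require an explicit change of generators (or, equivalently, a Witt-type argument on the quadratic/symplectic form), and you have correctly identified that as the crux.
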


The Pauli group  $\mathcal{P}_{1,2}$ cannot be described by Lemma \ref{l:2}, in fact $|\mathcal{P}_{1,2}|=16$ and  both $\mathcal{P}_{1,2} \neq D_8 \bullet_{\mathbb{Z}(2)} D_8$ and $\mathcal{P}_{1,2} \neq D_8 \bullet_{\mathbb{Z}(2)} Q_8$. This means that $\mathcal{P}_{1,2}$ is not an extraspecial $2$-group. An alternative argument can be used if we note that  $|Z(\mathcal{P}_{1,2})|> 2$ by Lemma \ref{l:1}.

\begin{remark}\label{r:1} If $\mathtt{p}>2$, then Lemma \ref{l:2} says that $G$ may be decomposed in the central product of finitely many factors isomorphic either to $E_1$ or $E_2$. If $\mathtt{p}=2$, the same is true but now the factors must be isomorphic either to $D_8$ or to $Q_8$. Because of these restrictive conditions, it is usual to talk about the extraspecial $\mathtt{p}$-group $E_{\mathtt{p}^{2n+1}}$ of order $\mathtt{p}^{2n+1}$, up to specify if $\mathtt{p}$   is even or odd.
\end{remark}

In order to get a decomposition results for Pauli groups we need to introduce a more general notion (see \cite{rob}, Exercise 8).

\begin{definition}[Generalised extraspecial $\mathtt{p}$-group]\label{def:2}
 A finite group $G$ of order $|G|=\mathtt{p}^n$ ($\mathtt{p}$ prime and $ n \ge 1$) is generalized extraspecial (or generalized Heisenberg), if $[G,G]$ is  of order $\mathtt{p}$ and $Z(G)$ is cyclic.
\end{definition}

One can find  the folllowing description for these groups.

\begin{lemma}\label{l:3} A  generalized extraspecial $\mathtt{p}$-group $G$ satisfies the  conditions:
\begin{itemize}
\item[(i)] $[G,G] \subseteq Z(G)$ and $G/Z(G)$ is elementary abelian of even rank; 
\item[(ii)] $G$ can be decomposed in weak central product;
\item[(iii)] $G=H/L$, where $H= E \times C$ with $E$ extraspecial, $C$ cyclic and $L$ is a subgroup of $H$ of exponent $\mathtt{p}$;
\item[(iv)] $G$ is nonabelian but all of its proper quotients are abelian (i.e.: $G$ is just nonabelian). 
\end{itemize}
\end{lemma}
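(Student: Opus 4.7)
The plan is to prove the four claims in order, exploiting the two controlling hypotheses: $|[G,G]|=\mathtt{p}$ and $Z(G)$ cyclic. The key preliminary observation is that a normal subgroup of prime order in a finite $\mathtt{p}$-group is automatically central, so $[G,G]\subseteq Z(G)$. Hence $G$ has nilpotency class at most two, a fact I will use repeatedly.

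For part (i), nilpotency class two gives the commutator identities $[xy,z]=[x,z][y,z]$ and $[x,z^n]=[x,z]^n$, so the map $x\mapsto x^{\mathtt{p}}$ satisfies $[x^{\mathtt{p}},y]=[x,y]^{\mathtt{p}}=1$ since $[x,y]\in[G,G]$ has order dividing $\mathtt{p}$; thus $x^{\mathtt{p}}\in Z(G)$ for every $x$, which shows $G/Z(G)$ is $\mathtt{p}$-elementary abelian. The commutator descends to a map $\beta:G/Z(G)\times G/Z(G)\to[G,G]\cong\mathbb{F}_{\mathtt{p}}$ which is $\mathbb{F}_{\mathtt{p}}$-bilinear (by class two), alternating, and nondegenerate (by definition of the centre), so $G/Z(G)$ is a symplectic $\mathbb{F}_{\mathtt{p}}$-space and therefore has even $\mathbb{F}_{\mathtt{p}}$-rank.

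For part (iv), $G$ is nonabelian because $[G,G]\neq 1$. Take any nontrivial $N\triangleleft G$. Since $G$ is a $\mathtt{p}$-group, $N\cap Z(G)\neq 1$; but $Z(G)$ is cyclic, so it has a unique subgroup of order $\mathtt{p}$, namely $\Omega_1(Z(G))$, and consequently every nontrivial subgroup of $Z(G)$ contains $\Omega_1(Z(G))$. Since $[G,G]\subseteq Z(G)$ has order $\mathtt{p}$, we get $[G,G]=\Omega_1(Z(G))\subseteq N$, and thus $G/N$ is abelian. Part (ii) is then comparatively soft: since $[G,G]\subseteq Z(G)$, any two normal subgroups $H,K$ of $G$ with $HK=G$ automatically satisfy $[H,K]\subseteq[G,G]\subseteq Z(G)$, so $G=H\bullet_{[H,K]}K$. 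Concretely, pick a symplectic basis of $G/Z(G)$ from the pairing $\beta$, split it into two complementary totally isotropic pieces, and let $H,K$ be their preimages under $G\to G/Z(G)$; both contain $Z(G)$ and are normal (since $G/Z(G)$ is abelian), and $HK=G$.

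Part (iii) is the main obstacle, since it requires building an explicit ``universal'' cover. Write $|Z(G)|=\mathtt{p}^{k}$ and $|G/Z(G)|=\mathtt{p}^{2n}$, and fix the type ($E_1/D_8$ versus $E_2/Q_8$) of the unique extraspecial group $E$ of order $\mathtt{p}^{2n+1}$ corresponding to whether $G$ has exponent $\mathtt{p}$ or not in the appropriate sense. Take $C=\langle c\rangle$ cyclic of order $\mathtt{p}^{k}$ and set $\widehat{H}=E\times C$; this has order $\mathtt{p}\cdot|G|$. Let $z_{E}$ generate $Z(E)=[E,E]$ and put $L=\langle(z_{E},c^{\mathtt{p}^{k-1}})\rangle$, which is of order $\mathtt{p}$, hence of exponent $\mathtt{p}$. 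The plan is then to exhibit an isomorphism $\widehat{H}/L\cong G$: lift a symplectic basis of $G/Z(G)$ to generators $x_{1},y_{1},\dots,x_{n},y_{n}$ normalised so that $[x_{i},y_{i}]$ generates $[G,G]$ and the other basis commutators vanish, and let $z$ generate $Z(G)$; send $X_{i}\mapsto x_{i}$, $Y_{i}\mapsto y_{i}$, $c\mapsto z$, and verify that the defining relations of $E\times C$ modulo $L$ match those of $G$, the identification $(z_{E},1)\equiv(1,c^{-\mathtt{p}^{k-1}})$ in $\widehat{H}/L$ matching the equality $[G,G]=\langle z^{\mathtt{p}^{k-1}}\rangle=\Omega_{1}(Z(G))$. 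The hardest point is choosing the lifts so that the $\mathtt{p}$-th powers $x_{i}^{\mathtt{p}},y_{i}^{\mathtt{p}}\in Z(G)$ come out compatibly with those of $X_{i},Y_{i}$ in $E$; this is where the exponent-$\mathtt{p}$ versus exponent-$\mathtt{p}^{2}$ dichotomy of $E$ (Lemmas \ref{e1e2} and \ref{d8q8}) is used, and possibly $L$ must be enlarged to absorb the discrepancies, which is allowed since only the exponent-$\mathtt{p}$ condition on $L$ is required.
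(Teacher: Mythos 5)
Your treatment of (i), (ii) and (iv) is complete, correct, and considerably more informative than the paper's own proof, which consists entirely of citations to \cite{rob} and \cite{kos}: the reduction of $[G,G]\subseteq Z(G)$ to the fact that a normal subgroup of order $\mathtt{p}$ is central, the symplectic form on $G/Z(G)$ forcing even rank, and the observation that every nontrivial normal subgroup meets $Z(G)$ and hence contains $\Omega_1(Z(G))=[G,G]$ are exactly the arguments needed to make those citations self-contained.

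Part (iii), however, has a genuine gap, and it sits precisely at the point you flag as ``the hardest point.'' Your construction needs lifts $x_i,y_i$ of a symplectic basis with $x_i^{\mathtt{p}},y_i^{\mathtt{p}}\in\Omega_1(Z(G))=[G,G]$, so that $E=\langle x_1,y_1,\dots,x_n,y_n\rangle$ is extraspecial and $G=E\,Z(G)$. This does not follow from Definition \ref{def:2} as stated: it is equivalent to the additional condition $\Phi(G)=[G,G]$, which is part of the usual definition of a generalized extraspecial group but is absent from the paper's. Concretely, the modular group $M=\langle x,y\mid x^{\mathtt{p}^3}=y^{\mathtt{p}}=1,\ x^y=x^{1+\mathtt{p}^2}\rangle$ of order $\mathtt{p}^4$ has $[M,M]=\langle x^{\mathtt{p}^2}\rangle$ of order $\mathtt{p}$ and $Z(M)=\langle x^{\mathtt{p}}\rangle$ cyclic, so it satisfies Definition \ref{def:2}; yet every element outside the abelian maximal subgroup $\langle x^{\mathtt{p}},y\rangle$ has order $\mathtt{p}^3$, so every nonabelian subgroup of $M$ contains an element of order $\mathtt{p}^3$ and $M$ has no extraspecial subgroup whatsoever. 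Your fallback of ``enlarging $L$ to absorb the discrepancies'' cannot rescue this: if $(E\times C)/L$ is nonabelian then necessarily $L\cap E=1$ (otherwise $L\cap E$ is a nontrivial normal subgroup of $E$, hence contains $Z(E)=[E,E]$, making the image of $E$ abelian and, since the image of $C$ is central, the whole quotient abelian), so any nonabelian quotient of $E\times C$ contains an isomorphic copy of $E$. Hence no choice of $E$, $C$, $L$ realises $M$, and (iii) is false under the definition you are working from. The repair is to add the hypothesis $\Phi(G)=[G,G]$ (i.e.\ all $\mathtt{p}$-th powers lie in $[G,G]$), after which your lifting argument closes without any adjustment of $L$: the subgroup $E$ generated by the lifts satisfies $E\cap Z(G)=[G,G]=Z(E)$, so $G=E\bullet Z(G)\cong (E\times Z(G))/L$ with $L$ the antidiagonal copy of $[G,G]$, which has order $\mathtt{p}$ and exponent $\mathtt{p}$. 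You should flag that Definition \ref{def:2} needs this strengthening (it is harmless for the paper's applications, since the Pauli groups do satisfy $\Phi(G)=[G,G]$).
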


\begin{proof} 
(i) and (ii) follow from the definitions and are mentioned in \cite[Exercises 8 and 9]{rob}. (iii) and (iv) can be deduced by \cite[Theorem 11.2]{kos}.  
\end{proof}

Note that Lemma \ref{l:3} does not guarantee that a nonabelian generalized extraspecial group has  center of prime order. In fact, the moment this is true, we get the case of extraspecial groups since the condition $[G,G]=Z(G)$ is automatically satisfied. Therefore Definition \ref{def:2} is more general than Definition \ref{def:1}.

\begin{remark}\label{r:2}
The Pauli group $\mathcal{P}_{1,2}$ is the central product of
 $D_8$ by $\mathbb{Z}(4)$ as noted in Lemma \ref{l:1}, that is, $\mathcal{P}_{1,2}= D_8 \bullet_{\mathbb{Z}(2)} \mathbb{Z}(4)$.
In particular, $Z(\mathcal{P}_{1,2}) \simeq \mathbb{Z}(4)$  and so $\mathcal{P}_{1,2}$ is not extraspecial by Lemma \ref{l:2}. On the other hand, $\mathcal{P}_{1,2}$ satisfies the conditions in Definition \ref{def:2} so it is generalized extraspecial. Note that here $D_8 \cap \mathbb{Z}(4)= \mathbb{Z}(2)$ is properly contained in $Z(\mathcal{P}_{1,2})$ and  Definition \ref{def:1} is indeed satisfied.
\end{remark}

\section{Main results}
\label{sec:main}

The first main result is a direct consequence of what we noted in Lemmas \ref{l:1} and  \ref{l:3}. Recall that \textit{just nonabelian groups} are groups that are nonabelian but whose all proper quotients are abelian (they have been extensively studied in~\cite{kos, russo}). A dual case is when we have a group which is nonabelian but all of its proper subgroups are abelian. These are called \textit{minimal nonabelian groups} and were classified by Redei and Schmidt (see \cite{rob}).
Both definitions are important in the context of Pauli groups.

\begin{theorem}\label{main1additional} The Pauli group $\mathcal{P}_{1,2}$ is just nonabelian but is not minimal nonabelian. The Pauli group $\mathcal{P}_{1,\mathtt{p}}$  is both just nonabelian and minimal nonabelian when  $\mathtt{p}\neq 2$. \end{theorem}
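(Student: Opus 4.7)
The plan is to treat the two Pauli groups separately, exploiting in each case the detailed structural information already collected in Lemmas~\ref{l:1}--\ref{l:3} and Remarks~\ref{nice}--\ref{r:2}. The key is that $\mathcal{P}_{1,\mathtt{p}}$ (for any prime $\mathtt{p}$) is a generalised extraspecial $\mathtt{p}$-group, and that order considerations make the subgroup lattice highly constrained.

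First I would handle $\mathcal{P}_{1,2}$. Being just nonabelian is immediate from Lemma~\ref{l:3}(iv) applied to the generalised extraspecial structure recorded in Remark~\ref{r:2}. For the reader's convenience I would also give a direct argument: since $[\mathcal{P}_{1,2},\mathcal{P}_{1,2}]\subseteq Z(\mathcal{P}_{1,2})\simeq \mathbb{Z}(4)$ by Lemma~\ref{l:1} and $[\mathcal{P}_{1,2},\mathcal{P}_{1,2}]$ has prime order $2$, any nontrivial normal subgroup $N$ of the $2$-group $\mathcal{P}_{1,2}$ meets $Z(\mathcal{P}_{1,2})$ in a nontrivial subgroup, which must therefore contain the unique subgroup of order $2$ of the cyclic group $Z(\mathcal{P}_{1,2})$, namely $[\mathcal{P}_{1,2},\mathcal{P}_{1,2}]$; this forces $\mathcal{P}_{1,2}/N$ to be abelian. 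That $\mathcal{P}_{1,2}$ is \emph{not} minimal nonabelian is witnessed by the subgroup $\langle u,a\rangle\simeq D_8$ appearing in Lemma~\ref{l:1}: it is a proper, nonabelian subgroup.

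Next I would address $\mathcal{P}_{1,\mathtt{p}}$ for odd $\mathtt{p}$. By Remark~\ref{nice}, $\mathcal{P}_{1,\mathtt{p}}\simeq \mathbb{H}(\mathbb{Z}(\mathtt{p}))$ is extraspecial of order $\mathtt{p}^3$, hence generalised extraspecial, so Lemma~\ref{l:3}(iv) again yields just nonabelianness. (Alternatively: in a $\mathtt{p}$-group every nontrivial normal subgroup contains the unique minimal central subgroup, and here that subgroup coincides with $[\mathcal{P}_{1,\mathtt{p}},\mathcal{P}_{1,\mathtt{p}}]=Z(\mathcal{P}_{1,\mathtt{p}})$ of order $\mathtt{p}$, so any proper quotient is abelian.) For minimal nonabelianness I would argue purely by orders: a proper subgroup $H<\mathcal{P}_{1,\mathtt{p}}$ has $|H|\in\{1,\mathtt{p},\mathtt{p}^2\}$, and every group of order $1$, $\mathtt{p}$ or $\mathtt{p}^2$ is abelian by a classical argument (the latter because a group of order $\mathtt{p}^2$ has cyclic centre of order $\mathtt{p}$ impossible). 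Hence all proper subgroups are abelian.

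I expect no serious obstacle: the only subtle contrast between the two cases is that for $\mathtt{p}=2$ the Pauli group has order $2^4$ rather than $\mathtt{p}^3$, which both inflates the centre (to $\mathbb{Z}(4)$ rather than $\mathbb{Z}(2)$, forcing non-extraspeciality) and makes room for a proper nonabelian subgroup of order $8$; for odd $\mathtt{p}$ the order $\mathtt{p}^3$ is too small to accommodate any nonabelian proper subgroup. The mild point to take care of is to phrase the just-nonabelian argument uniformly, so that the common mechanism (nontrivial normal subgroups of a $\mathtt{p}$-group meet the socle of the centre, which here contains $[G,G]$) is visible in both cases.
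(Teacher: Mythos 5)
Your proposal is correct and follows essentially the same route as the paper's proof, which simply cites Lemma~\ref{l:1} and Lemma~\ref{l:3} for $\mathcal{P}_{1,2}$ and Remarks~\ref{nice} and~\ref{r:2} for odd $\mathtt{p}$; you merely make explicit the details the paper leaves implicit (the socle argument in the cyclic centre for just-nonabelianness, the $D_8=\langle u,a\rangle$ witness against minimal nonabelianness, and the order-$\mathtt{p}^2$ argument for the odd case). No gaps.
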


\begin{proof} Consider $\mathcal{P}_{1,2}$.  The result follows from Lemmas \ref{l:1} and  \ref{l:3}. Now consider $\mathcal{P}_{1,\mathtt{p}}$ and note that $\mathcal{P}_{1,\mathtt{p}} \simeq \mathbb{H}(\mathbb{Z}(\mathtt{p}))$. Now  
Remarks \ref{nice} and \ref{r:2} (or even the classifications known for these classes of groups in \cite{rob} and \cite{kos}) show that the result is true. \end{proof}

Because of Remark \ref{r:2} and the presentation in Lemma \ref{l:1}, one can see that there are just 6 nonabelian subgroups in $\mathcal{P}_{1,2}$, three of these are isomorphic to $D_8$ and the remaining three are isomorphic to $Q_8$. All the other maximal subgroups of $\mathcal{P}_{1,2}$ are abelian. This means that we have no hope to find that all subgroups of $\mathcal{P}_{1,2}$ are abelian, but the situation is much better when we look at quotients because of Theorem \ref{main1additional}. Thanks to the preliminary Lemmas \ref{l:1},  \ref{l:2} and \ref{l:3}, we may prove another theorem of decomposition and this time for Pauli groups $\mathcal{P}_{n,2}$ with $n \ge 1$ arbitrary.

\begin{theorem}\label{main1} For all $n \ge 1$  there are normal subgroups $H_1, H_2, \ldots, H_n$ in $\mathcal{P}_{n,2}$ such that $$\mathcal{P}_{n,2}= ...(((H_1 \bullet_{L_1} H_2) \bullet_{L_2} H_3) \bullet_{L_3} H_4)\ldots $$ with $L_j\simeq [\mathcal{P}_{j+1, 2}, \mathcal{P}_{j,2}]$ abelian normal subgroup and $H_j  \simeq \mathcal{P}_{1,2}$  for all $j=1,2, \ldots, n$. \end{theorem}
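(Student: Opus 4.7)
The approach is to identify, for each qubit slot $j \in \{1, \ldots, n\}$, a distinguished subgroup $H_j$ of $\mathcal{P}_{n,2}$ that is isomorphic to the single-qubit Pauli group $\mathcal{P}_{1,2}$, and to realise $\mathcal{P}_{n,2}$ as the iterated weak central product of the $H_j$'s.

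Concretely, I would take $H_j$ to be the subgroup consisting of the elements $i^d\bigl(I^{\otimes(j-1)}\otimes X^{\alpha}Z^{\beta}\otimes I^{\otimes(n-j)}\bigr)$ with $d \in \mathbb{F}_4$ and $\alpha,\beta \in \mathbb{F}_2$. The projection onto the $j$-th tensor slot is visibly an isomorphism $H_j \to \mathcal{P}_{1,2}$, so Lemma \ref{l:1} applies verbatim to each $H_j$. Normality of $H_j$ in $\mathcal{P}_{n,2}$ follows from the fact that conjugation of a tensor product acts slot-by-slot: outside slot $j$ the conjugating factor normalises $\{I\}$, inside slot $j$ it normalises $\mathcal{P}_{1,2}$, and the central phases commute with everything.

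Next I would verify the axioms of Definition \ref{def:1} at each stage. Single-qubit Pauli operators on distinct slots commute, hence $[H_j,H_k] \subseteq \{i^d I^{\otimes n} : d \in \mathbb{F}_4\} \subseteq Z(\mathcal{P}_{n,2})$ for all $j \ne k$, which is precisely centrality condition (ii). For condition (i), I would argue inductively: set $K_1 := H_1$ and, assuming $K_j := H_1 \bullet_{L_1} \cdots \bullet_{L_{j-1}} H_j$ has been shown to coincide with the subgroup of $\mathcal{P}_{n,2}$ supported on the first $j$ qubits together with all central phases, check that $K_j$ and $H_{j+1}$ are both normal in $\mathcal{P}_{n,2}$ and that $|K_j H_{j+1}| = |K_j|\,|H_{j+1}|/|K_j \cap H_{j+1}| = 4^{j+1}\cdot 16/4 = 4^{j+2} = |\mathcal{P}_{j+1,2}|$. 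Hence $K_{j+1} := K_j \bullet_{L_j} H_{j+1}$ with $L_j := [K_j,H_{j+1}] \subseteq Z(\mathcal{P}_{n,2})$, and after $n-1$ steps I reach $\mathcal{P}_{n,2} = K_n$, which is the desired decomposition. The abstract isomorphism $L_j \simeq [\mathcal{P}_{j+1,2},\mathcal{P}_{j,2}]$ then falls out of the identifications $K_j \simeq \mathcal{P}_{j,2}$ and $K_{j+1} \simeq \mathcal{P}_{j+1,2}$ produced by the induction.

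The main obstacle I anticipate is the bookkeeping around the common phase subgroup. Every $H_j$ contains the full central $\mathbb{F}_4$-phase group $\{i^d I^{\otimes n}\}$, so pairwise intersections $H_j \cap H_k$ are always of order $4$, and the iterated weak central product must correctly account for this overlap at every step without double-counting it. Keeping the inductive identification $K_j \simeq \mathcal{P}_{j,2}$ aligned with the abstract weak central product from Definition \ref{def:1} -- in particular verifying that the abelian normal subgroup $L_j$ is really isomorphic to $[\mathcal{P}_{j+1,2},\mathcal{P}_{j,2}]$ rather than merely to the shared central phases -- will require careful comparison of the derived subgroups of the $K_{j+1}$'s with the centre of $\mathcal{P}_{n,2}$, guided by the order count above and by the computation of $[\mathcal{P}_{1,2},\mathcal{P}_{1,2}]$ already carried out in the proof of Lemma \ref{l:1}.
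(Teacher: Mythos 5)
Your construction is genuinely different from the paper's. The paper works with presentations: it writes $\mathcal{P}_{n,2}=\langle U,X_1,\dots,X_n,Z_1,\dots,Z_n\rangle$ where a single order-four element $U$ is shared by all the factors $H_j=\langle U,X_j,Z_j\rangle\simeq\mathcal{P}_{1,2}$, and the cross relations $[X_j,X_k]=[Z_j,Z_k]=U^2$ make $[H_j,H_k]$ a nontrivial central subgroup; the decomposition is then read off from the combined presentation. You instead take $H_j$ to be the concrete ``slot'' subgroup $\{i^d(I^{\otimes(j-1)}\otimes X^{\alpha}Z^{\beta}\otimes I^{\otimes(n-j)})\}$. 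For the bare existence of a weak central product decomposition into $n$ normal copies of $\mathcal{P}_{1,2}$ your route works and is arguably cleaner and more verifiable: normality, the order count $|K_jH_{j+1}|=4^{j+1}\cdot 16/4=|\mathcal{P}_{j+1,2}|$, and condition (ii) of Definition \ref{def:1} all check out, and the order-$4$ intersections $H_j\cap H_k$ you worry about are harmless because Definition \ref{def:1} constrains only $[H,K]$, not $H\cap K$.

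The genuine gap is the clause $L_j\simeq[\mathcal{P}_{j+1,2},\mathcal{P}_{j,2}]$, which you flag as bookkeeping to be checked but which your choice of $H_j$ actually falsifies rather than leaves open. Operators supported on disjoint tensor slots commute exactly (the phases are central scalars), so $[H_j,H_k]=1$ for $j\neq k$ and hence $L_j:=[K_j,H_{j+1}]=1$ in your construction. But $[\mathcal{P}_{j+1,2},\mathcal{P}_{j,2}]$, under the natural embedding of $\mathcal{P}_{j,2}$ into the first $j$ slots of $\mathcal{P}_{j+1,2}$, contains $[X\otimes I,\,Z\otimes I]=-I$ and is therefore nontrivial (the paper explicitly takes these $L_j$ to be nontrivial central subgroups). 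So no comparison of derived subgroups will repair this: to satisfy that clause you must change the subgroups themselves, e.g.\ to factors of the paper's type $\langle U,X_j,Z_j\rangle$ whose generators genuinely anticommute across different indices, so that the amalgamating subgroups $L_j=[K_j,H_{j+1}]$ come out nontrivial. As written, your argument proves a (correct and clean) weak central product decomposition, but not the statement with its specific identification of the $L_j$.
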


\begin{proof} We proceed by a recursive argument, so we offer a constructive method at the same time. The case $n=1$ is trivial (compare with Lemma \ref{l:1} and Remark \ref{r:2}). Consider $n=2$ and the Pauli group $\mathcal{P}_{2,2}$ with 64 elements. Using the matrix representation of $\mathcal{P}_{1,2}$ and looking at Lemma \ref{l:1}, we get
\[Z(\mathcal{P}_{1,2}) = \left \{\left( \begin{array}{ll}{1} & {0} \\ {0} & {1}\end{array}\right), \left( \begin{array}{ll}{-1} & {0} \\ {0} & {-1}\end{array}\right), \left( \begin{array}{ll}{i} & {0} \\ {0} & {i}\end{array}\right), \left( \begin{array}{ll}{-i} & {0} \\ {0} & {-i}\end{array}\right) \right \} \simeq \Z(4).\]
Having in mind the proof of Lemma \ref{l:1} and denoting by $X, Y, Z$ the Pauli matrices in \eqref{paulimatricesxyz}, we have seen that $Z(\mathcal{P}_{1,2})=\langle XYZ \rangle$. The generators of $\mathcal{P}_{2,2}$ are $\{X \otimes I, Y \otimes I, Z \otimes I, I \otimes Z, I \otimes X\}$ and $\mathcal{P}_{2,2}$ has the following matrix representation in $\mathrm{SL}(4,\mathbb{C})$
 \begin{gather*}
A= X\otimes I= \left( \begin{array}{cccc}{0} & {0} & {1} & {0} \\ {0} & {0} & {0} & {1} \\ {1} & {0} & {0} & {0} \\ {0} & {1} & {0} & {0}\end{array}\right), 
 B= Y \otimes I= \left( \begin{array}{cccc}{0} & {0} & {-i} & {0} \\ {0} & {0} & {0} & {-i} \\ {i} & {0} & {0} & {0} \\ {0} & {i} & {0} & {0}\end{array}\right),\\ 
C= Z \otimes I=  \left( \begin{array}{cccc}{1} & {0} & {0} & {0} \\ {0} & {1} & {0} & {0} \\ {0} & {0} & {-1} & {0} \\ {0} & {0} & {0} &{-1}\end{array}\right),\\ 
D= I\otimes Z = \left( \begin{array}{cccc}{1} & {0} & {0} & {0} \\ {0} & {-1} & {0} & {0} \\ {0} & {0} & {1} & {0} \\ {0} & {0} & {0} & {-1}\end{array}\right), 
E = I \otimes X =  \left( \begin{array}{cccc}{0} & {1} & {0} & {0} \\ {1} & {0} & {0} & {0} \\ {0} & {0} & {0} & {1} \\ {0} & {0} & {1} & {0}\end{array}\right).
\end{gather*}
Let's note a general fact:  the commutator between two matrices,  may be written as \[[A,B]=A^{-1}B^{-1}AB=A^{-1}A^B,\]
where \(A^B=B^{-1}AB\) denotes the conjugate of $A$ by $B$. Note that we have always  the rule ${[A,B]}^{-1}=[B,A]$, when we form the inverse of a commutator. Moreover \[[AB,C]={[A,C]}^B \ [B,C].\]
Details can be found in \cite{rob}. According to \cite{gap}, \(\mathcal{P}_{2,2}\) is the group No. 266 among the nonabelian ones of order \(64\), that is,  \begin{equation}\label{newpres}\mathcal{P}_{2,2}=\langle A,B,C,D,E \ | \  A^2=B^2=C^2=D^2=E^2=I,\end{equation}\[ [ABC,A]=[ABC,B]=[ABC,C]=[ABC,D]=[ABC,E]=I \rangle\] with centre $Z(\mathcal{P}_{2,2}) \simeq Z(\mathcal{P}_{1,2}) \simeq \mathbb{Z}(4)$. The above presentation is realised with the minimum number of generators, and we may apply the aforementioned commutator identities, and
\[I=[ABC,A]={[AB,A]}^C \ [C,A]={({[A,A]}^B \ [B,A])}^C \ [C,A]={[B,A]}^C \ [C,A]\]
in order to get the additional relations
\begin{equation}\label{newpres2}[A,B]=[D,E]=[B,C]=[C,A]=-I\end{equation}  \[[C,E]=[B,E]=[A,D]=[A,E]=[C,D]=I.\]
Now we show an additional method of construction, thanks to Lemma \ref{l:1}. We get
$$H_1=\langle A,B,C \ | \ A^2=B^2=C^2=I, (BC)^4=(CA)^4=(AB)^4=I \rangle \simeq \mathcal{P}_{1,2};$$
and rewrite this group in the following way
$$H_1=\langle u,x,y \mid u^4 = x^2 = 1, u^2 = y^2, x^{-1}ux = u^{-1}, uy = yu, xy = yx \rangle \simeq \mathcal{P}_{1,2};$$
where $u=AB,   \ x=B,  \  y=ABC;$
now we take another copy $H_2$ of $H_1$ with $u=AB,   \ z=B,  \  t=ABC;$
$$H_2=\langle u,z,t \mid u^4 = z^2 = 1, u^2 = t^2, z^{-1}uz = u^{-1}, ut = tu, zt = tz \rangle \simeq \mathcal{P}_{1,2}.$$
Note that 
\begin{gather*}
Z(H_1)=\langle ABC \rangle = \left\langle \left( \begin{array}{cccc}{i} & {0} & {0} & {0} \\ {0} & {i} & {0} & {0} \\ {0} & {0} & {i} & {0} \\ {0} & {0} & {0} & {i}\end{array}\right)\right\rangle \simeq Z(H_2) \simeq \Z(4).
\end{gather*}
We are going to show  $\mathcal{P}_{2,2} \simeq H_1 \bullet_{L_1} H_2$ with $L_1 \subseteq Z(\mathcal{P}_{2,2})$.
First of all, we define  the set of generators
$ \mathrm{gen}\left(\mathcal{P}_{1,2}\right)=\{u,x,y\}=\{U,X_1,Z_1\} $ with $u=U$, $X_1=x$  and $Z_1=y$, then
\begin{align}
\label{generators}
\mathrm{gen}\left(\mathcal{P}_{n,2}\right) &= \left \{ U, X_1, X_2, \ldots, X_n, Z_1, Z_2, \ldots, Z_n \right \} \\
&=\left\{U, X_{1}, Z_{1}\right\} \cup\left\{U, X_{2}, Z_{2}\right\} \cup \ldots \cup\left\{U, X_{n}, Z_{n}\right\}
\end{align}
and  $\langle U,X_1,Z_1 \rangle \simeq \langle U,X_2,Z_2 \rangle  \simeq \ldots \simeq \langle U,X_n,Z_n \rangle \simeq \mathcal{P}_{1,2}$, or equivalently $\langle U,X_1,Z_1 \rangle \simeq H_1$, $\langle U,X_2,Z_2 \rangle \simeq H_2$ and so on with all $H_j \simeq \mathcal{P}_{1,2}$. 
Then we denote by $R\left(D_{8}\right)$ the relations  in a presentation like \eqref{d8} for $D_8$, with $R(\mathbb{Z}(4))$ the relation on the generator $Z_j$ such that its fourth power equals one and  with $R\left (U,X_1,Z_1 \right) $  the relations for $\mathcal{P}_{1,2}$ in a presentation like  \eqref{g1}. This means that $R\left (U,X_1,Z_1 \right )$ may be written as
\[
R\left (U,X_1,Z_1 \right ) = R\left(D_{8}\right) \cup R(\mathbb{Z}(4)) \cup \{ Z_1^2 = U^2  \} \cup \{[X_1,Z_1]=1\} \cup \{[U,X_1]=1\},
\]
where $Z_1^2 = U^2$ shows that we are identifying the center of the factor $\simeq D_8$ with a cyclic subgroup of order two in the other factor $\simeq \Z(4)$ when we form the weak central product (see Example \ref{r:2}). The remaining relations show that $X_1$ commutes both with $Z_1$ and $U$. More generally, we have the same behaviour for all $j=1,2,\dots,n$
\[
R\left (U,X_j,Z_j \right ) = R\left(D_{8}\right) \cup R(\mathbb{Z}(4)) \cup \{ Z_j^2 = U^2  \} \cup \{[X_j,Z_j]=1\} \cup \{[U,X_j]=1\}.
\]
Therefore 
\begin{equation}
\label{presentation}
\mathcal{P}_{n,2} =  \langle \mathrm{gen}\left(\mathcal{P}_{n,2}\right) \ |  \ R(U,X_j,Z_j) \ \textrm{ for all } j=1,2,\dots,n \rangle
\end{equation}
Note that all the other relations can be derived from \eqref{presentation} in a general presentation for a Pauli group of type $\mathcal{P}_{n,2}$.
In particular, we have  for $n=2$ that $\mathrm{gen}(\mathcal{P}_{2,2})=\{U,X_1,Z_1,X_2,Z_2\}$ so the cardinality of this set is five (compare with \eqref{newpres}). Moreover 
\[[X_1, X_2] = [Z_1, Z_2] = U^2\] 
\[Z\left(\left\langle U, X_{1},Z_1 \right\rangle\right)=\left\langle Z_{1}\right\rangle=Z(H_1) \simeq  Z\left(\left\langle U, X_2,Z_2,\right\rangle\right)=\left\langle Z_{2}\right\rangle=Z(H_2) \simeq \langle U \rangle \simeq \mathbb{Z}(4) \] and so
\[\mathcal{P}_{2,2} \simeq \mathcal{P}_{1,2} \bullet_{\langle U \rangle} \mathcal{P}_{1,2}. \]
Note that for $\mathcal{P}_{2,2}$ we have directly found a cyclic group \[\Z(4) \simeq L_1=\langle U\rangle=[H_1,H_2] \subseteq H_1 \cap H_2 \] such that
$$L_1=[H_1, H_2]  = \langle U \rangle  \subseteq Z(\mathcal{P}_{2,2})$$
(actually the equality holds above) suitable for  decomposition of $\mathcal{P}_{2,2}$ in terms of Definition \ref{def:1} with two factors $H_1\simeq \mathcal{P}_{1,2}$  and $H_2 \simeq \mathcal{P}_{1,2}$ identified via their common subgroup $L_1$. The thesis follows for $n=2$.

Because of the constructive method, we can go ahead with $n=3$ and find an $H_3=\langle  U, X_3, Z_3 \ |   \ R(U,X_3,Z_3)\rangle \simeq \mathcal{P}_{1,2}$ such that $\mathcal{P}_{3,2} = (H_1 \bullet_{L_1} H_2) \bullet_{L_2} H_3$ with \[L_2=[(H_1 \bullet_{L_1} H_2), H_3] \simeq [\mathcal{P}_{2,2},\mathcal{P}_{1,2}]\subseteq  (H_1 \bullet_{L_1} H_2) \cap H_3\] which turns out to be an abelian group of order $4$ contained in $Z(\mathcal{P}_{3,2})$. Note that for any two normal subgroups $\Gamma_1, \Gamma_2$ of a given group $\Gamma$, we always have  $[\Gamma_1, \Gamma_2] \subseteq \Gamma_1 \cap \Gamma_2$. In our case, this implies that 
\[|[(H_1 \bullet_{L_1} H_2), H_3] | \le |(H_1 \bullet_{L_1} H_2) \cap H_3|.\]
Again we can check the conditions in Definition \ref{def:1} and find the required decomposition for $\mathcal{P}_{3,2}$.  For larger Pauli groups, the argument is the same. The set of generators for $\mathcal{P}_{n,2}$ is given by $2n+1$ Pauli matrices $2^n \times 2^n$, described by the set  in \eqref{generators}
and such that
$$H_1=\langle U,X_1,Z_1 \ |   \ R(U,X_1,Z_1) \rangle \simeq \mathcal{P}_{1,2}; \ \ H_2=\langle U,X_2, Z_2 \ |   \ R(U,X_2,Z_2)  \rangle \simeq \mathcal{P}_{1,2}; \ \ \ldots
$$
$$
H_n=\langle U,X_n, Z_n \ |   \ R(U,X_n,Z_n) \rangle \simeq \mathcal{P}_{1,2}$$
and then we put all the relations together, obtaining \eqref{presentation}. The result follows.
\end{proof}

 We can modify the argument of Theorem \ref{main1} for  $\mathcal{P}_{n,\mathtt{p}}$ when $\mathtt{p}$ is an odd prime.

\begin{corollary}\label{nicebis} 
Given an odd prime $\mathtt{p}$ and $n, m \ge 1$, the Pauli group $\mathcal{P}_{n,\mathtt{p}^m}$ is  isomorphic to a Heisenberg group $\mathbb{H}({\mathbb{Z}(\mathtt{p}^m)}^{n})$ of order $\mathtt{p}^{{2nm}+1}$. 
\end{corollary}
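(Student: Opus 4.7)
My plan is to mimic the recursive, constructive argument in the proof of Theorem~\ref{main1}, exploiting the fact that in odd characteristic the single-qudit factor $\mathcal{P}_{1,\mathtt{p}^m}$ is already \emph{extraspecial} (centre of prime order $\mathtt{p}$), not merely generalized extraspecial as when $\mathtt{p}=2$. As a consequence, the gluing subgroup $L$ in every weak central product will be the same central $\mathbb{Z}(\mathtt{p})$ throughout, with no order-$4$ centre or $i$-phase to track, which makes the construction strictly simpler than the one carried out for $\mathcal{P}_{n,2}$.

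\textbf{Steps.} First, a direct count from~\eqref{new1} gives $|\mathcal{P}_{n,\mathtt{p}^m}|=\mathtt{p}\cdot(\mathtt{p}^m)^{2n}=\mathtt{p}^{2nm+1}$, matching the claimed order. Second, I would handle the base case $n=1$: for $m=1$ this is already the content of Remark~\ref{nice} ($\mathcal{P}_{1,\mathtt{p}}\simeq\mathbb{H}(\mathbb{Z}(\mathtt{p}))$), while for general $m\ge 1$ the Weyl--Heisenberg commutation $Z^{\beta}X^{\alpha}=\omega^{\alpha\beta}X^{\alpha}Z^{\beta}$, combined with the relations that $X,Z$ are torsion of $\mathtt{p}$-power order and $\omega^{\mathtt{p}}=1$, presents $\mathcal{P}_{1,\mathtt{p}^m}$ as the generalized Heisenberg group $\mathbb{H}(\mathbb{Z}(\mathtt{p}^m))$ of order $\mathtt{p}^{2m+1}$. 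Third, I iterate precisely as in Theorem~\ref{main1}: define
$$
H_j:=\langle X_j,Z_j,\omega\rangle,\qquad X_j:=I^{\otimes(j-1)}\otimes X\otimes I^{\otimes(n-j)},\quad Z_j:=I^{\otimes(j-1)}\otimes Z\otimes I^{\otimes(n-j)},
$$
so each $H_j\simeq\mathcal{P}_{1,\mathtt{p}^m}$ by Step~2. The tensor-product structure forces the strong commutation $[H_j,H_k]=1$ for $j\ne k$, hence \emph{a fortiori} $[H_j,H_k]\subseteq L:=\langle\omega\rangle\subseteq Z(\mathcal{P}_{n,\mathtt{p}^m})$, which verifies Definition~\ref{def:1}. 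Matching orders then yields
$$
\mathcal{P}_{n,\mathtt{p}^m}\;\simeq\;H_1\bullet_L H_2\bullet_L\cdots\bullet_L H_n,
$$
which, reading Remark~\ref{nice} for a rank-$n$ base, is the iterated-central-product description of $\mathbb{H}(\mathbb{Z}(\mathtt{p}^m)^n)$.

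\textbf{Main obstacle.} The only delicate point is the last identification: the symbol $\mathbb{H}(\mathbb{Z}(\mathtt{p}^m)^n)$ must be given an unambiguous reading compatible with Remark~\ref{nice}, as opposed to the na\"ive ``unipotent $3\times 3$ matrix group over a ring'' construction (which would yield the wrong order $\mathtt{p}^{3nm}$). A notation-agnostic workaround, which I would include as a safety net, is to appeal to Lemma~\ref{l:2}: both sides are extraspecial of order $\mathtt{p}^{2nm+1}$ with coinciding exponent, and hence the classification forces them to be isomorphic as central products of the appropriate copies of $E_1$'s (and at most a single $E_2$).
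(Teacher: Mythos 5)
Your proof follows essentially the same route as the paper's: both reduce to the single-qudit case $\mathcal{P}_{1,\mathtt{q}}\simeq\mathbb{H}(\mathbb{Z}(\mathtt{q}))$ and then iterate a (weak) central product of $n$ copies of it glued along the order-$\mathtt{p}$ centre, exactly in the spirit of Theorem~\ref{main1}. Your write-up is in fact more explicit than the paper's two-line argument --- the order count, the commuting tensor-factor subgroups $H_j$, the direct treatment of general $m$ at the base case, and the appeal to the classification in Lemma~\ref{l:2} to pin down the final identification with $\mathbb{H}(\mathbb{Z}(\mathtt{p}^m)^n)$ are all left implicit there --- so the proposal is correct and, if anything, more careful.
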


\begin{proof}
We can use an argument as in Theorem \ref{main1}. First we consider $\mathcal{P}_{1,\mathtt{p}}$ and we  observe that 
$\mathcal{P}_{1,\mathtt{p}} \simeq \mathbb{H}(\mathbb{Z}(\mathtt{p}))$.
Then we consider  $\mathcal{P}_{2,\mathtt{p}}$ and find that $$\mathcal{P}_{2,\mathtt{p}}=\mathcal{P}_{1,\mathtt{p}} \bullet \mathcal{P}_{1,\mathtt{p}} \simeq \mathbb{H}(\mathbb{Z}(\mathtt{p})) \bullet \mathbb{H}(\mathbb{Z}(\mathtt{p})) \simeq \mathbb{H}({\mathbb{Z}(\mathtt{p})}^2) $$
This is a $\mathtt{p}$-group with center of order $\mathtt{p}$, exponent $\mathtt{p}$ and order $\mathtt{p}^5$. Iterating this observation and noting that
$$\mathbb{H}({\mathbb{Z}(\mathtt{p})}^2) \bullet \mathbb{H}(\mathbb{Z}(\mathtt{p})) 
\simeq \mathbb{H}({\mathbb{Z}(\mathtt{p})}^3), $$
 the result for $m=1$ follows.

The general case of $\mathcal{P}_{n,\mathtt{p}^m}$  follows by induction.
\end{proof}

One of the consequences of the above results is expressed below.
 
\begin{corollary}\label{jna} The Pauli groups $\mathcal{P}_{n,2}$ are just nonabelian if and only if $n=1$. On the other hand,   $\mathcal{P}_{n,\mathtt{p}}$ with  $\mathtt{p}$ odd is just nonabelian for all $n \geq 1$.
\end{corollary}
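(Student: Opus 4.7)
The corollary contains three assertions, which I would tackle separately. The claim that $\mathcal{P}_{1,2}$ is just nonabelian is immediate from Theorem \ref{main1additional}, so no additional work is needed there beyond a citation.

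For the odd-$\mathtt{p}$ part my plan is to apply Corollary \ref{nicebis} and identify $\mathcal{P}_{n,\mathtt{p}}$ with the Heisenberg group $\mathbb{H}(\Z(\mathtt{p})^{n})$. By Remark \ref{nice} the latter is extraspecial of order $\mathtt{p}^{2n+1}$ with centre of prime order $\mathtt{p}$; any extraspecial group automatically fulfils Definition \ref{def:2}, because its centre equals its commutator subgroup and has prime order (hence is cyclic). Lemma \ref{l:3}(iv) then delivers the just-nonabelian conclusion for every $n \geq 1$ without further computation.

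The genuinely hard assertion is that $\mathcal{P}_{n,2}$ ceases to be just nonabelian once $n \geq 2$. My strategy is to use the weak central product decomposition from Theorem \ref{main1} to produce a proper nontrivial normal subgroup $N$ of $\mathcal{P}_{n,2}$ whose quotient still contains a non-commuting pair of generator images. The natural candidates for $N$ are the normal factor $H_{k} \simeq \mathcal{P}_{1,2}$ from the decomposition, an intermediate partial central product $H_{1}\bullet_{L_{1}}\cdots \bullet_{L_{k-1}}H_{k}$ for $k<n$, or the normal closure of a single $X_{k}$ or $Z_{k}$. For each candidate one would verify that at least one generator commutator of the form $[X_{i},Z_{i}]$ or $[X_{i},X_{j}]$ survives in $\mathcal{P}_{n,2}/N$, thereby witnessing a nonabelian proper quotient.

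The main obstacle that I expect to be the crux of the argument is structural: since $[\mathcal{P}_{n,2},\mathcal{P}_{n,2}] = \langle U^{2}\rangle$ has order two and lies inside the cyclic centre $\langle U\rangle \simeq \Z(4)$, every nontrivial normal subgroup of $\mathcal{P}_{n,2}$ must intersect this cyclic centre nontrivially and therefore contain $U^{2}$. This collapses the whole commutator subgroup in any quotient, so each of the natural candidates above produces an abelian quotient. Since $\mathcal{P}_{n,2}$ moreover satisfies Definition \ref{def:2} for every $n$, Lemma \ref{l:3}(iv) would at first sight suggest just-nonabelianness in all cases. Completing the proof therefore hinges on isolating a structural feature of the weak central product that separates $n=1$ from $n\geq 2$ in a subtler way than the order of $[\mathcal{P}_{n,2},\mathcal{P}_{n,2}]$ alone conveys; I would look for it by refining the subgroup-lattice analysis begun after Lemma \ref{l:1} and exploiting the interaction between the factors $H_{k}$ and the identification subgroups $L_{j}$.
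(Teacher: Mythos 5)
Your treatment of the first and third assertions coincides with the paper's: the case $n=1$ is quoted from Theorem \ref{main1additional}, and for odd $\mathtt{p}$ both you and the paper pass through Corollary \ref{nicebis} and Lemma \ref{l:3}(iv). The divergence is in the claim that $\mathcal{P}_{n,2}$ fails to be just nonabelian for $n\ge 2$, which you leave open. The paper's own proof of this direction asserts that the normal subgroup $N\simeq\mathcal{P}_{1,2}$ supplied by Theorem \ref{main1} has quotient $\mathcal{P}_{n,2}/N\simeq\mathcal{P}_{n-1,2}$, ``manifestly nonabelian''. That step does not survive the objection you yourself raise: $|\mathcal{P}_{n,2}/N|=4^{n+1}/16=4^{n-1}$ whereas $|\mathcal{P}_{n-1,2}|=4^{n}$, and already for $n=2$ the quotient $\mathcal{P}_{2,2}/H_1\simeq H_2/(H_1\cap H_2)\simeq \mathbb{Z}(2)\oplus\mathbb{Z}(2)$ is abelian, since passing to the quotient by one factor of a central product collapses the amalgamated centre of the other.

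More importantly, the ``obstacle'' you flag in your last paragraph is not something to be circumvented by a finer analysis of the subgroup lattice: it is a complete argument that the assertion for $n\ge 2$ is false as stated. For every $n$ one has $Z(\mathcal{P}_{n,2})=\langle U\rangle\simeq\mathbb{Z}(4)$ cyclic and $[\mathcal{P}_{n,2},\mathcal{P}_{n,2}]=\langle U^2\rangle$ of order $2$, so $\mathcal{P}_{n,2}$ satisfies Definition \ref{def:2} for all $n$; since a nontrivial normal subgroup of a finite $2$-group meets the centre nontrivially, and the cyclic group $\langle U\rangle$ has $\langle U^2\rangle$ as its unique minimal subgroup, every nontrivial normal subgroup contains the derived subgroup and every proper quotient is abelian. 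Hence $\mathcal{P}_{n,2}$ is just nonabelian for all $n\ge 1$, exactly as Lemma \ref{l:3}(iv) predicts. You should therefore not search for ``a structural feature separating $n=1$ from $n\ge 2$'' --- for this property there is none --- but rather record that your computation refutes the corollary's first assertion and pinpoints the faulty step in the paper's proof; the statement itself needs to be amended.
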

\begin{proof}
The sufficient condition is clear from Theorem \ref{main1additional}. The necessary condition can be extrapolated from the argument in the proof of Theorem \ref{main1}. In fact any time $n \ge2$, $\mathcal{P}_{n,2}$ possesses always a nontrivial normal subgroup $N$ which is isomorphic to $\mathcal{P}_{1,2}$ and $G/N \simeq G_{n-1}$ is manifestly nonabelian. In case of odd primes, it is sufficient to apply Corollary \ref{nicebis} and Lemma \ref{l:3}.
\end{proof}

\section{Applications of the main results}
\label{sec:applications}

In this section we give two applications of our main results. First we show that our decomposition can be applied to the `lifted' Pauli groups of Gottesman and Kuperberg~\cite{gottesmankuperberg, gottesmantalk}. Second, we show how to detect families of abelian subgroups directly from the decomposition in Theorem \ref{main1} and Corollary \ref{nicebis}.

\subsection{Decomposition result for `lifted' Pauli groups}

Stabiliser codes for quantum error correction were introduced in \cite{gottesman1997stabilizer} and constitute an active field of research in quantum information theory. 
A stabiliser is an abelian subgroup of the Pauli group and the error correcting properties of stabiliser codes are firmly rooted in the group structure of Pauli groups.
The reader may refer to \cite{gottesman2010introduction} for a formal introduction to quantum error correction and stabiliser codes.

Quantum error-correcting codes on the qudits enable a greater variety of codes~\cite{chau, gottesman1999, knill, rains} and it is interesting to consider stabiliser codes for qudits.
It is possible to extend the notion of stabiliser codes to qudits of any dimension~\cite{knill} but part of the structure of the qubit stabiliser is lost.
An interesting case where it is possible to extended the theory of qubit stabilisers without loss---by exploiting the fact that there is a unique finite field $\mathbb{F}_\mathtt{q}$ for every prime power $\mathtt{q}=\mathtt{p}^m$---is for $\mathtt{q}$-dimensional qudits where $\mathtt{q}$ is a prime power~\cite{ashi,ketkar}.
Gottesman and Kuperberg recently discussed how the standard way of extending stabiliser codes for prime power qudits does not exploit the full field structure and propose a new definition of stabiliser codes, based on the notion of `lifted' Pauli groups, over $\mathtt{q}$-dimensional registers~\cite{gottesmankuperberg, gottesmantalk}.

We present the definition of `lifted' Pauli groups given in~\cite{gottesmankuperberg,gottesmantalk}.

\begin{definition}
 Let $\mathtt{q}=\mathtt{p}^m$ where $p$ is an odd prime, and let $n\geq 1$. The \textit{lifted Pauli group} is the group of unitriangular matrices 
 \[
 \hat{\mathcal{P}}_{n,\mathtt{q}}=\left\{\left(\begin{array}{ccc}{1} & {\vec{\alpha}} & {\eta} \\ {0} & {I} & {\vec{\beta}^{T}} \\ {0} & {0} & {1}\end{array}\right)\right\}
 \]
 where $\vec{\alpha}, \, \vec{\beta} \in \mathbb{F}_\mathtt{q}^n$, $[\cdot]^T$ indicates the transpose of a vector, and $\eta \in \mathbb{F}_\mathtt{q}$.
\end{definition}
Elements of the lifted Pauli group are also denoted by 
\[
P = \left(\begin{array}{ccc}{1} & {\vec{\alpha}} & {\eta} \\ {0} & {I} & {\vec{\beta}^{T}} \\ {0} & {0} & {1}\end{array}\right) = \omega^\eta X ^ {\vec{\alpha}} Z^ {\vec{\beta}}
\]
for an element $P\in\hat{\mathcal{P}}_{n, \mathtt{q}}$. Note that in the last expression $X$ and $Z$ are just symbols and not Pauli operators; similarly, $\omega$ is just a symbol and not a root of unity.
Using a multiplication rule similar to the one used for the standard Pauli group that is is possible to show that there exist a surjective homomorphism 
\begin{equation}\label{definitionofpi}
\Pi : \omega^{\eta} X^{\vec{\alpha}} Z^{\vec{\beta}} \in \hat{\mathcal{P}}_{n, \mathtt{q}} \longmapsto \Pi\left(\omega^{\eta} X^{\vec{\alpha}} Z^{\vec{\beta}}\right)=\omega^{\operatorname{tr} \eta} X^{\vec{\alpha}} Z^{\vec{\beta}}
\in \mathcal{P}_{n, \mathtt{q}}.
\end{equation}

Therefore we have the following relevant result.

\begin{corollary}\label{nicetris}
Assume $\widehat{\mathcal{P}}_{n,\mathtt{q}}$ is a lifted Pauli group of $\mathcal{P}_{n,\mathtt{q}}$, where $\mathtt{q}=\mathtt{p}^m$ and $\mathtt{p}$ odd prime. Then $\widehat{\mathcal{P}}_{n,\mathtt{q}}$ is isomorphic to $\mathbb{H}({\mathbb{Z}(\mathtt{p^m})}^n)$ up to a quotient.
\end{corollary}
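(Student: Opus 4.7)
The plan is to view Corollary \ref{nicetris} as a direct consequence of the First Isomorphism Theorem applied to the surjective homomorphism $\Pi$ in \eqref{definitionofpi}, together with Corollary \ref{nicebis}. In other words, the phrase ``up to a quotient'' will be realised as ``quotient by $\ker(\Pi)$'', so I first need to identify this kernel and then compose with the isomorphism established in Corollary \ref{nicebis}.

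First I would verify surjectivity of $\Pi$: given any $\omega^c X^{\vec{\alpha}} Z^{\vec{\beta}} \in \mathcal{P}_{n,\mathtt{q}}$ with $c \in \mathbb{F}_\mathtt{p}$, the field trace $\operatorname{tr}:\mathbb{F}_\mathtt{q}\to\mathbb{F}_\mathtt{p}$ is $\mathbb{F}_\mathtt{p}$-linear and surjective, so some $\eta\in\mathbb{F}_\mathtt{q}$ satisfies $\operatorname{tr}(\eta)=c$, and the element $\omega^\eta X^{\vec{\alpha}} Z^{\vec{\beta}} \in \widehat{\mathcal{P}}_{n,\mathtt{q}}$ is a preimage. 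Next I would identify the kernel: $\omega^\eta X^{\vec{\alpha}} Z^{\vec{\beta}} \in \ker(\Pi)$ iff $\omega^{\operatorname{tr}\eta} X^{\vec{\alpha}} Z^{\vec{\beta}}$ is the identity of $\mathcal{P}_{n,\mathtt{q}}$ (see~\eqref{new1}), which forces $\vec{\alpha}=\vec{\beta}=\vec 0$ and $\operatorname{tr}(\eta)=0$. Hence
$$\ker(\Pi) \;=\; \bigl\{\,\omega^\eta \;:\; \eta \in \ker(\operatorname{tr})\,\bigr\},$$
an abelian normal subgroup of $\widehat{\mathcal{P}}_{n,\mathtt{q}}$ whose order is $\mathtt{p}^{m-1}$, since $\ker(\operatorname{tr})$ is a hyperplane in the $\mathbb{F}_\mathtt{p}$-vector space $\mathbb{F}_\mathtt{q}\simeq\mathbb{F}_\mathtt{p}^m$. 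A quick order-count is consistent with this: $|\widehat{\mathcal{P}}_{n,\mathtt{q}}|=\mathtt{p}^{2mn+m}$ while $|\mathcal{P}_{n,\mathtt{q}}|=\mathtt{p}^{2nm+1}$ by Corollary \ref{nicebis}, and their ratio is exactly $\mathtt{p}^{m-1}$.

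Putting the pieces together, the First Isomorphism Theorem yields
$$\widehat{\mathcal{P}}_{n,\mathtt{q}}\,/\,\ker(\Pi) \;\simeq\; \mathcal{P}_{n,\mathtt{q}},$$
and then Corollary \ref{nicebis} replaces the right-hand side with $\mathbb{H}(\mathbb{Z}(\mathtt{p}^m)^n)$, giving the desired conclusion that $\widehat{\mathcal{P}}_{n,\mathtt{q}}$ admits $\mathbb{H}(\mathbb{Z}(\mathtt{p}^m)^n)$ as a quotient.

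The only genuinely non-formal step is the claim, already asserted in the paper, that $\Pi$ is a homomorphism; in a fully written proof I would confirm this by multiplying two matrices in the unitriangular presentation of $\widehat{\mathcal{P}}_{n,\mathtt{q}}$, observing that the ``$\eta$-coordinate'' of the product is a sum of $\eta$'s together with a term bilinear in $\vec{\alpha}$ and $\vec{\beta}$, and then applying $\operatorname{tr}$ using its additivity to match the Pauli multiplication rule. I do not expect a real obstacle here: it is a mechanical computation analogous to the one in Section~2 verifying that $(\mathbb{F}^3,\square)\simeq\mathbb{H}(\mathbb{F})$, just transported through the trace.
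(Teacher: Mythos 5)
Your proposal follows exactly the same route as the paper's own (one-line) proof: apply the First Isomorphism Theorem to the epimorphism $\Pi$ of \eqref{definitionofpi} and then invoke Corollary \ref{nicebis} to identify $\mathcal{P}_{n,\mathtt{q}}$ with $\mathbb{H}({\mathbb{Z}(\mathtt{p}^m)}^n)$. Your additional details --- surjectivity of the trace, the identification of $\ker(\Pi)$ as the phases $\omega^{\eta}$ with $\operatorname{tr}(\eta)=0$, and the order count $\mathtt{p}^{m-1}$ --- are correct and in fact make the argument more complete than the paper's.
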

\begin{proof}
This  follows from an application of Corollary \ref{nicebis} and from the First Isomorphism Theorem for groups, applied to the epimorphism \eqref{definitionofpi}.
\end{proof}

In analogy to \eqref{definitionofpi}, one can define an epimorphism as in \cite{gottesmankuperberg} when we deal with $\mathtt{p}=2$, 
\[
\varepsilon :  \hat{\mathcal{P}}_{n, 2} \mapsto  \mathcal{P}_{n, 2}.
\]
and we have  by Theorem \ref{main1} that:

\begin{corollary}\label{nicetetra}
The lifted Pauli group $\widehat{\mathcal{P}}_{n,2}$ is isomorphic to the weak central product of $n$ copies of  $\mathcal{P}_{1,2}$ up to a quotient.
\end{corollary}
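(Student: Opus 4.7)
The plan is to mimic the proof of Corollary \ref{nicetris} almost verbatim, with the Heisenberg identification for odd primes replaced by the weak central product decomposition of Theorem \ref{main1} valid for the prime $\mathtt{p}=2$. Both ingredients needed are already in place: the surjective homomorphism $\varepsilon : \hat{\mathcal{P}}_{n,2} \to \mathcal{P}_{n,2}$ introduced just before the statement (the even-characteristic analogue of $\Pi$ in \eqref{definitionofpi}), and the iterated weak central product description
\[\mathcal{P}_{n,2} \simeq \big(\cdots((H_1 \bullet_{L_1} H_2) \bullet_{L_2} H_3) \cdots \bullet_{L_{n-1}} H_n \big),\]
with each $H_j \simeq \mathcal{P}_{1,2}$, furnished by Theorem \ref{main1}.

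Concretely, I would proceed in two short steps. First, apply the First Isomorphism Theorem for groups to the epimorphism $\varepsilon$, obtaining
\[\hat{\mathcal{P}}_{n,2}/\ker(\varepsilon) \simeq \mathcal{P}_{n,2}.\]
Second, substitute into the right-hand side the decomposition supplied by Theorem \ref{main1}, which yields
\[\hat{\mathcal{P}}_{n,2}/\ker(\varepsilon) \simeq \big(\cdots((H_1 \bullet_{L_1} H_2) \bullet_{L_2} H_3) \cdots \bullet_{L_{n-1}} H_n \big),\]
and this is exactly the claim that $\hat{\mathcal{P}}_{n,2}$ is isomorphic to the iterated weak central product of $n$ copies of $\mathcal{P}_{1,2}$ up to the quotient by $\ker(\varepsilon)$. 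The argument is therefore just an invocation of the universal property.

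The only subtle point, and the step I would treat with most care, is the well-definedness of $\varepsilon$ as a surjective group homomorphism. In analogy with the construction of $\Pi$ in \eqref{definitionofpi}, one sends the formal symbol $\omega$ from the unitriangular presentation of $\hat{\mathcal{P}}_{n,2}$ to the concrete fourth root of unity $i$ appearing in $\mathcal{P}_{1,2}$, while leaving the shift and clock symbols $X$ and $Z$ formally untouched; the multiplication rule on $\hat{\mathcal{P}}_{n,2}$ then pushes forward to the multiplication rule on $\mathcal{P}_{n,2}$. Because the generators of $\mathcal{P}_{n,2}$ are all reached this way, surjectivity is immediate, and the remainder of the proof is nothing more than the chain of isomorphisms displayed above.
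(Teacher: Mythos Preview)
Your proposal is correct and matches the paper's argument essentially verbatim: the paper simply invokes the epimorphism $\varepsilon$ (introduced immediately before the statement) together with Theorem~\ref{main1}, which is exactly your two-step chain via the First Isomorphism Theorem. Your additional paragraph on the well-definedness and surjectivity of $\varepsilon$ goes slightly beyond what the paper spells out, but this is harmless elaboration rather than a different approach.
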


In particular, we can see that recognising abelian subgroups in lifted Pauli groups is reduced to recognise those in the usual Pauli groups up to a quotient. In the successive subsection we will see how to do this via Theorems \ref{main1additional} and \ref{main1}.

\subsection{Identification of abelian subgroups}

In this section we show how our main results can be used to detect families of abelian subgroups. This may have applications in the theory of mutually unbiased basis~\cite{durt}.

\begin{corollary}\label{mub}
The Pauli group $\mathcal{P}_{n,\mathtt{p}^m}$  is minimal nonabelian for all $\mathtt{p} \neq 2$ and $m \ge 1$. Moreover, it contains always two distinct maximal abelian normal subgroups $A,B$ and a nonnormal abelian subgroup $H$ such that $$\mathcal{P}_{n,\mathtt{p}^m}=A \rtimes H=B \rtimes H,$$ 
where $[A,B]=A \cap B = Z (\mathcal{P}_{n,\mathtt{p}^m})$ and $A \simeq B$.
\end{corollary}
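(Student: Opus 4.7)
The plan is to invoke Corollary \ref{nicebis} to identify $\mathcal{P}_{n,\mathtt{p}^m}$ with the Heisenberg group $\mathbb{H}(\mathbb{Z}(\mathtt{p}^m)^n)$, and then to promote the two-Lagrangian picture of \eqref{h6} and Figure 1 from the rank-one case. Writing elements as triples $(\vec p,\vec q,t)$ with $\vec p,\vec q\in \mathbb{Z}(\mathtt{p}^m)^n$, $t\in \mathbb{Z}(\mathtt{p}^m)$, and alternating form $\alpha((\vec p_1,\vec q_1),(\vec p_2,\vec q_2))=\vec p_1\cdot \vec q_2-\vec q_1\cdot \vec p_2$, so that $Z=\{(0,0,t)\}\cong \mathbb{Z}(\mathtt{p}^m)$, the natural candidates are
\[
A=\{(\vec p,0,t)\},\qquad B=\{(0,\vec q,t)\},\qquad H=\{(\vec r,\vec r,0)\};
\]
namely, the two coordinate polarisations enriched by the centre, plus their common diagonal complement.

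The checks then parallel the rank-one picture. First, $A$ and $B$ are abelian since $\alpha$ annihilates any pair sharing a zero slot, and $H$ is abelian since $\alpha((\vec r_1,\vec r_1),(\vec r_2,\vec r_2))=\vec r_1\cdot\vec r_2-\vec r_2\cdot\vec r_1=0$. Second, $A$ and $B$ are normal in $G$ because both contain $[G,G]=Z$ and $G/Z$ is abelian. Third, $A\cap B=Z$ by inspection, and a direct computation
\[
[(\vec p,0,0),(0,\vec q,0)]=(0,0,2\,\vec p\cdot \vec q),
\]
combined with the invertibility of $2$ in $\mathbb{Z}(\mathtt{p}^m)$, shows $[A,B]=Z$. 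Fourth, $A\cap H=B\cap H=1$ and $|A|\cdot|H|=\mathtt{p}^{m(n+1)}\cdot \mathtt{p}^{mn}=|G|$, so $G=A\rtimes H=B\rtimes H$. That $H$ is not normal follows by conjugating $(\vec r,\vec r,0)$ by $(\vec p,0,0)\in A$ to obtain $(\vec r,\vec r,-2\vec p\cdot\vec r)\notin H$. Maximal-abelianness of $A$ and $B$ comes from nondegeneracy of $\alpha$: any element outside $A$ has nonzero $\vec q$-component, hence fails to commute with some $(\vec p,0,0)\in A$, so $A$ is self-centralising and therefore maximal abelian; symmetrically for $B$. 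The isomorphism $A\simeq B$ is then the obvious one swapping $\vec p\leftrightarrow \vec q$, which preserves the group law on $A$ and $B$ since both sit over $\mathbb{Z}(\mathtt{p}^m)^n\oplus \mathbb{Z}(\mathtt{p}^m)$ with trivial cocycle.

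The main obstacle is the opening clause that $\mathcal{P}_{n,\mathtt{p}^m}$ is \emph{minimal nonabelian}. In the strict Redei--Schmidt sense of the preamble to Theorem \ref{main1additional}, the claim is untenable outside the base case $(n,m)=(1,1)$, since $\mathcal{P}_{n,\mathtt{p}^m}$ visibly contains nonabelian subgroups isomorphic to $\mathbb{H}(\mathbb{Z}(\mathtt{p}))$. I would therefore read the assertion through the lens of Lemma \ref{l:3}(iv): $\mathbb{H}(\mathbb{Z}(\mathtt{p}^m)^n)$ is generalised extraspecial in the sense of Definition \ref{def:2} because its derived subgroup coincides with the cyclic centre $Z\cong\mathbb{Z}(\mathtt{p}^m)$, so every proper quotient is abelian, i.e.\ the group is \emph{just} nonabelian. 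Transferring this through Corollary \ref{nicebis} yields the intended content of the first sentence, and the semidirect-product part above finishes the proof.
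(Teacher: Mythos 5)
Your argument is, in essence, the paper's own: the published proof consists of two sentences instructing the reader to apply Corollary \ref{nicebis} and then transport the subgroups $A$, $B$ and the semidirect decomposition \eqref{h6} from the Heisenberg discussion of Section 2. You have simply carried out the verifications that the paper leaves implicit (abelianness of $A,B,H$; normality of $A$ and $B$ because they contain $[G,G]$; the computation $[(\vec p,0,0),(0,\vec q,0)]=(0,0,2\,\vec p\cdot\vec q)$ giving $[A,B]=A\cap B=Z$; self-centralisation of $A$ and $B$ from nondegeneracy; non-normality of $H$), and they are all correct. One detail to adjust: with $t$ ranging over $\mathbb{Z}(\mathtt{p}^m)$ your model has order $\mathtt{p}^{m(2n+1)}$, whereas $\mathcal{P}_{n,\mathtt{p}^m}$ has order $\mathtt{p}^{2nm+1}$ with centre of order $\mathtt{p}$; the correct model composes $\alpha$ with the trace $\mathbb{F}_{\mathtt{p}^m}\to\mathbb{F}_{\mathtt{p}}$, and since the trace pairing is nondegenerate every step of your argument survives verbatim. (This ambiguity is already present in the paper's own statement of Corollary \ref{nicebis}.)

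Your hesitation over ``minimal nonabelian'' is warranted and identifies a defect of the statement rather than a gap in your proof. Whenever $nm\ge 2$, two noncommuting elements $x,y$ of $\mathcal{P}_{n,\mathtt{p}^m}$ generate a nonabelian subgroup with $|\langle x,y\rangle|\le \mathtt{p}^{3}<\mathtt{p}^{2nm+1}$, since its image in the elementary abelian quotient $G/Z$ has rank at most $2$ while $G/Z$ has rank $2nm$; so the group has proper nonabelian subgroups and is not minimal nonabelian in the Redei--Schmidt sense fixed before Theorem \ref{main1additional}. The paper's justification (``since so are Heisenberg groups'') is valid only in the order-$\mathtt{p}^{3}$ case $n=m=1$. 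Your substitute reading --- just nonabelian, because the group is extraspecial with $Z=[G,G]$ of order $\mathtt{p}$, and every nontrivial normal subgroup of a $\mathtt{p}$-group meets the centre and hence contains $[G,G]$ --- is the correct salvage and is exactly what Corollary \ref{jna} records.
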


\begin{proof} Apply Corollary \ref{nicebis}. Now the considerations in the previous Paragraph 2.2 along with Remark \ref{nice} conclude the proof.
\end{proof}

Note that $\mathcal{P}_{n,\mathtt{p}^m}$ are also minimal nonabelian when $\mathtt{p}\neq 2$, since so are Heisenberg groups. In fact they satisfy the classification results in \cite{rob} being extraspecial $\mathtt{p}$-groups.

In order to study the case $\mathtt{p}=2$ we must recall some facts on dihedral groups from \cite{rob}. We first introduced the dihedral group  $D_8 $ in Lemma~\ref{l:1}. Fig.~2 contains the lattice of subgroups of $D_8=\langle u,a \rangle$ which is involved in the central product with $\mathbb{Z}(4)$ for the formation of $\mathcal{P}_{1,2}$. The lattice of  subgroups of $D_8$ is
$$\mathcal{L}(D_8)=\{1, \langle u \rangle, \langle u^2\rangle, \langle a \rangle, \langle ua \rangle, \langle u^2a \rangle, \langle u^3a \rangle, \langle u^2,a\rangle, \langle u^2,ua \rangle, D_8 \}.$$
The normal subgroups in $D_8$ are of course $D_8$ and $1$, but also $U=\langle u \rangle$, $Z(D_8)=\langle u^2 \rangle$, $M_1=\langle u^2,a\rangle$ and $M_2=\langle u^2,ua \rangle$.  Then there are subgroups of order two below these, namely $V,W,H,K$.
It is clear the analogy with the left side of Fig.~2.

\begin{center}
\begin{tikzpicture}
[scale=.8,auto=left]
  \node (a) at (0,0) {$D_8$};
  \node (b) at (-2,-2) {$M_1$};
  \node (c) at (0,-2) {$U$};
  \node (d) at (2,-2) {$M_2$};
  \node (e) at (-4,-4) {$H$};
  \node (f) at (-2,-4) {$K$};
  \node (g) at (0,-4) {$Z(D_8)$};
  \node (h) at (2,-4) {$V$};
  \node (i) at (4,-4) {$W$};
  \node (j) at (0,-6) {$1$};

  
  \draw (a) -- (b);
  \draw (a) -- (c);
  \draw (a) -- (d);
  \draw (b) -- (e);
  \draw (b) -- (f);
  \draw (b) -- (g);
  \draw (c) -- (g);
  \draw (d) -- (g);
  \draw (d) -- (h);
  \draw (d) -- (i);
  \draw (e) -- (j);
  \draw (f) -- (j);
  \draw (g) -- (j);
  \draw (h) -- (j);
  \draw (i) -- (j);
 \end{tikzpicture}
\centerline{\textbf{Figure 3}: Hasse diagram of $D_8$. }
 \end{center}
\vspace{0.5em}

Denoting by $\sigma(r)$ the sum of divisors of $r$ and by $\tau(r)$ the number of divisors of $r$, it is known that the number of subgroups of $D_8$ can be counted as
  \begin{equation}\label{counting1}|\mathcal{L}(D_8)|=\sigma(4) + \tau(4)= 7 +3 = 10
\end{equation}
and all of them are abelian up to $D_8$, so the number of nontrivial abelian subgroups of $D_8$ is $$c_{\mathrm{ab}}(D_8)=8.$$

\begin{corollary}\label{mubbis}
The number of nontrivial abelian subgroups $c_{\mathrm{ab}}(\mathcal{P}_{n,2})$  of $\mathcal{P}_{n,2}$  is bounded by
  $$ 2(c_{\mathrm{ab}}(\mathcal{P}_{n-1,2})+1) \ge c_{\mathrm{ab}}(\mathcal{P}_{n,2}) \ge 10n.$$
\end{corollary}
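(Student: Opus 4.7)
The proof splits naturally into two independent parts. For the \emph{lower bound} $c_{\mathrm{ab}}(\mathcal{P}_{n,2}) \ge 10n$, I would invoke Theorem~\ref{main1} to write $\mathcal{P}_{n,2}$ as an iterated weak central product of $n$ copies $H_1,\dots,H_n$, each isomorphic to $\mathcal{P}_{1,2}$, amalgamated along the common subgroup $L \simeq \Z(4)$ that realises $Z(\mathcal{P}_{n,2})$. From the construction in that proof, $H_j \cap H_k = L$ for all $j \ne k$, so the only abelian subgroups that can appear simultaneously inside several $H_j$ are the two nontrivial subgroups of $L$. Lemma~\ref{l:1} and Remark~\ref{r:2} exhibit inside each $H_j$ both a dihedral subgroup $D_j \simeq D_8$ and a central cyclic factor $\langle b_j\rangle \simeq \Z(4)$, meeting in $\langle u^2\rangle \subseteq L$. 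Combining $c_{\mathrm{ab}}(D_8)=8$ recorded in~\eqref{counting1} with the abelian order-$4$ and order-$8$ subgroups of $H_j$ that lie outside $D_j$ and outside $L$ (readable from the Hasse diagram in Figure~2), one checks that each $H_j$ contributes at least ten nontrivial abelian subgroups not contained in $L$. These $n$ families are pairwise disjoint by the intersection property, yielding $c_{\mathrm{ab}}(\mathcal{P}_{n,2}) \ge 10n$.

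For the \emph{upper bound} $c_{\mathrm{ab}}(\mathcal{P}_{n,2}) \le 2(c_{\mathrm{ab}}(\mathcal{P}_{n-1,2})+1)$, I would argue by induction on $n$ using the one-step decomposition $\mathcal{P}_{n,2} = \mathcal{P}_{n-1,2} \bullet_{L_{n-1}} H_n$ with $H_n \simeq \mathcal{P}_{1,2}$ and $L_{n-1} = \mathcal{P}_{n-1,2} \cap H_n \subseteq Z(\mathcal{P}_{n,2})$. For an abelian $A \le \mathcal{P}_{n,2}$, the image $\overline A$ in the quotient $\mathcal{P}_{n,2}/H_n \simeq \mathcal{P}_{n-1,2}/L_{n-1}$ is abelian, and canonically determines an abelian subgroup of $\mathcal{P}_{n-1,2}$ as its preimage. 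The plan is to show that this correspondence $A \mapsto \overline A$ is at most two-to-one: for each abelian target, the fibre contains at most two abelian preimages, corresponding to a canonical lift and a unique nontrivial twist by the central involution $\langle u^2\rangle \subseteq L_{n-1}$. The additive $+1$, doubled by the factor~$2$, absorbs the two degenerate fibres in which $A$ already sits inside $H_n$ or $A \cdot H_n$ fills all of $\mathcal{P}_{n,2}$.

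The \textbf{main obstacle} is the verification of the two-to-one lifting property in the upper bound. Since $\mathcal{P}_{n,2}$ is only generalised extraspecial (Definition~\ref{def:2}) rather than a genuine direct product, the abelian lifts of $\overline A$ are constrained by the commutator pairing $\mathcal{P}_{n-1,2} \times H_n \to L_{n-1}$ provided by condition~(ii) of Definition~\ref{def:1}, and the argument must exploit the cyclicity of $L_{n-1} \simeq \Z(4)$ to show that the only genuine ambiguity in an abelian lift is the choice of whether or not to multiply by the unique central involution of $L_{n-1}$. Making this rigorous requires transferring the structural information about $\mathcal{P}_{1,2}$ from Lemma~\ref{l:1}---in particular the identification $\Phi(\mathcal{P}_{1,2})=[\mathcal{P}_{1,2},\mathcal{P}_{1,2}]$ of order two---to $H_n$ and combining it with the inductive hypothesis; the base case is then handled directly from the subgroup census underlying Figure~2.
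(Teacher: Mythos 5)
Your lower bound is sound and is essentially the paper's own argument: the paper likewise applies Theorem~\ref{main1}, harvests the $c_{\mathrm{ab}}(D_8)=8$ abelian subgroups of a dihedral subgroup inside each factor $H_j\simeq\mathcal{P}_{1,2}$, adds two further abelian subgroups per factor read off Figure~2, and concludes $10n$. Your extra care about disjointness via $H_j\cap H_k=L$ is actually an improvement on the paper, since two of the eight abelian subgroups of $D_8=\langle u,a\rangle$ (namely $\langle u\rangle$ and $\langle u^2\rangle$) lie inside the amalgamated subgroup $L$ and would otherwise be counted $n$ times; your observation that each $H_j$ still has at least ten nontrivial abelian subgroups outside $L$ (in fact $17-2=15$ of them, by the census in the paper's proof) repairs this.

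The upper bound is where the proposal breaks, and the gap is not merely the unverified step you flag but the fact that the step is false. You propose that $A\mapsto\overline{A}$, the image in $\mathcal{P}_{n,2}/H_n$, is at most two-to-one on abelian subgroups. Since $|H_n|=16$ and $|\mathcal{P}_{n,2}|=4^{n+1}$, the quotient $\mathcal{P}_{n,2}/H_n$ has order $4^{n-1}$; already for $n=2$ it has order $4$ and hence at most five subgroups in total, while your own lower bound gives $c_{\mathrm{ab}}(\mathcal{P}_{2,2})\ge 20$, so the fibres of $A\mapsto\overline{A}$ have average size at least $4$ and no refinement of the ``twist by the central involution'' idea can make the map two-to-one. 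A second defect is the claim that $\overline{A}$ ``canonically determines an abelian subgroup of $\mathcal{P}_{n-1,2}$ as its preimage'': preimages of abelian subgroups under a central quotient need not be abelian ($\mathcal{P}_{1,2}$ itself is a nonabelian central extension of $\Z(2)\oplus\Z(2)$ by $\Z(4)$), so the correspondence does not even land in the set you are counting. The paper's route is different: it lists the $17$ nontrivial abelian subgroups of $\mathcal{P}_{1,2}$ explicitly, checks $17\le 2\,c_{\mathrm{ab}}(D_8)+2$, and then asserts the recursion $c_{\mathrm{ab}}(\mathcal{P}_{n,2})\le 2\,c_{\mathrm{ab}}(\mathcal{P}_{n-1,2})+2$ directly from the two factors and the order-$4$ amalgamated subgroup in each step of the central-product decomposition, never passing through the quotient. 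If you want a correspondence-based argument, it should target abelian subgroups of the factors themselves (for instance $A\mapsto(A\cap\mathcal{P}_{n-1,2},\,A\cap H_n)$), not subgroups of the order-$4^{n-1}$ quotient.
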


\begin{proof} We begin by proving the lower bound. Apply Theorem \ref{main1}, noting that each factor $H_j$ in the decomposition in weak central product contains at least one dihedral group of order $8$. We get $8n$ abelian subgroups in this way. Now consider Fig.~2 and note that in each $H_j$ we also find two  abelian subgroups like $\langle b \rangle$ and $\langle ub\rangle$ of order two,  so  the result follows.

For the upper bound, we refer again to Fig.~2 and  Lemma \ref{l:1}. We observe from  \cite{gap} that all nontrivial abelian subgroups of  $\mathcal{P}_{1,2}$ are the following:   the unique normal subgroup $\langle u^2 \rangle$ of order two;  the six nonnormal subgroups of order two $\langle a \rangle$, $\langle ua \rangle$, $ \langle u^2a \rangle,$ $ \langle u^3a \rangle$, $\langle ub \rangle$ and $\langle u^3b \rangle$; four cyclic subgroups of order four $\langle b \rangle$, $\langle u \rangle$, $\langle ab \rangle$ and $\langle uab \rangle$; three $2$-elementary abelian $2$-subgroups of rank two $\langle u^2, a \rangle$, $\langle u^2, ua \rangle$ and $\langle u^2, ub \rangle$; finally
    the subgroups of order eight $\langle u,b \rangle$, $\langle a, b \rangle$ and $\langle ua, b \rangle$. This shows that
    $$c_{\mathrm{ab}}(\mathcal{P}_{1,2})=1+6+4+3+3=17 \le 2 \  c_{\mathrm{ab}}(D_8) + 2,$$
    because in the process of forming the central product we find at least two   subgroups in $\mathcal{P}_{1,2}$ isomorphic to $D_8$. 
    By Theorem \ref{main1} we have that
$$ c_{\mathrm{ab}}(\mathcal{P}_{2,2}) \le 2 \  c_{\mathrm{ab}}(\mathcal{P}_{1,2})+2$$
 because in the process of forming the central product we find at least two   subgroups in $\mathcal{P}_{1,2}$  and a subgroup of order $4$ which gives the additional term equal to $2$.  Therefore we find that
$$ c_{\mathrm{ab}}(\mathcal{P}_{n,2}) \le 2 \  c_{\mathrm{ab}}(\mathcal{P}_{n-1,2})+2$$
from which the result follows.
\end{proof}

Note that there exist more sophisticated counting formulas than the one we presented (e.g. ~\cite{berk}). Our purpose was to show a simple application of our main results for the problem of counting abelian subgroups of Pauli groups. This follows from the dihedral component of the groups in the factorisation given in Theorem~\ref{main1}.

\proof[\textbf{Acknowledgements}]
We acknowledge the referee and Drew S. Vandeth for relevant observations. This research was supported in part by the National Science Foundation under Grant No. NSF PHY-1748958 and by the Heising-Simons Foundation. A.R. is supported by the Simons Foundation through \textit{It from Qubit: Simons Collaboration on Quantum Fields, Gravity, and Information}.

\vspace{1em}

\end{document}